\numberwithin{equation}{section}
\theoremstyle{plain}
\newtheorem{theorem}{Theorem}[section]
\newtheorem{lemma}[theorem]{Lemma}
\newtheorem{proposition}[theorem]{Proposition}
\newtheorem{corollary}[theorem]{Corollary}
\newtheorem{conjecture}[theorem]{Conjecture}
\theoremstyle{definition}
\newtheorem{definition}[theorem]{Definition}
\newtheorem{question}[theorem]{Question}
\newtheorem{acknowledgements}[theorem]{Acknowledgements}
\begin{document}

%%%%%%%%%%%%%%%%%%%%%%%%%%%%%%%%%%%%%%%%%%%%%%%%%%%%%%%%%%%%%%%%%%%%%%
%% FRONT MATTER
%%%%%%%%%%%%%%%%%%%%%%%%%%%%%%%%%%%%%%%%%%%%%%%%%%%%%%%%%%%%%%%%%%%%%%

\title{Complexity in Young's Lattice}
\author{Alexander Wires}
%\ead{slawkenbergius@hotmail.com}
\address{School of Economic Mathematics, Southwestern University of Finance and Economics\\ 555 Liutai Avenue, Wenjiang District\\
Chengdu 611130, Sichuan, China}
\email{awires@swufe.edu.cn}
\date{July 20, 2019}

%% AMS subject classification; see http://www.ams.org/msc
%% List classification codes in order of relevance
\subjclass{05A05, 05A17, 05A18}

%% Key words and phrases
\keywords{Young's lattice, partitions, definability}

\begin{abstract}
We investigate the complexity of the partial order relation of Young's lattice. The definable relations are characterized by establishing the maximal definability property modulo the single automorphism given by conjugation; consequently, as an ordered set Young's lattice has an undecidable elementary theory and is inherently non-finitely axiomatizable but every ideal generates a finitely axiomatizable universal class of equivalence relations. We end with conjectures concerning the complexities of the $\Sigma_1$ and $\Sigma_2$-theories.
\end{abstract}

\thanks{The author was supported in part by National Natural Science Foundation of China Research Fund for International Young Scientists \#11650110429}

\maketitle

%%%%%%%%%%%%%%%%%%%%%%%%%%%%%%%%%%%%%%%%%%%%%%%%%%%%%%%%%%%%%%%%%%%%%%%%%%%%%%%%%%%%%%%%%%%%%%%%%%%%%%%%%%%%%
%% MAIN MATTER
%%%%%%%%%%%%%%%%%%%%%%%%%%%%%%%%%%%%%%%%%%%%%%%%%%%%%%%%%%%%%%%%%%%%%%%%%%%%%%%%%%%%%%%%%%%%%%%%%%%%%%%%%%%%%

\section{Introduction}

Young's lattice is the lattice of non-negative integer partitions ordered by inclusion of their associated Young diagrams (or Ferrers diagrams) where the smallest element in the order is represented by the empty partition. A large measure of the research into Young's lattice reflects its classical and deep connection to representations of the finite symmetric groups. Focusing on the partial-order itself, Young's lattice  serves as a prominent example in the theory of differential posets contained in the seminal paper of R.P. Stanley \cite{stanley}. We find in the paper of Suter \cite{suter} an illustration of how intersections of certain principal ideals display dihedral automorphisms. We are led to wonder what other complexities may be hidden in the order relation of Young's lattice? As one approach to the question, in this paper we consider the complexity of Young's lattice from a logical perspective. Partial motivation may also be found in similar investigations into the complexity of the various finite alphabet word orders in Kuske \cite{kuske} and Halfon, Schnoebelen and Zetzsche \cite{halfon}.

We consider Young's lattice as an ordered set and seek to characterize the first-order definable relations in this order. Transposition of the Young diagrams of partitions induces an automorphism of the order relation which is traditionally referred to as conjugation. After adding a single constant to the language of the ordered set to account for this automorphism, we show that Young's lattice has a certain bi-interpretation with natural arithmetic (Theorem~\ref{thm:mdp}) called the maximal definability property Kudinov, Selivanov and Yartseva \cite{kudinov}; consequently, the elementary theory is undecidable and inherently non-finitely axiomatizable. One step in this direction is accomplished by showing there is an interpretation of arithmetic utilizing $\Pi_3$-formulas and a single constant (Theorem~\ref{thm:sigma1}). 

We would like to have a characterization of certain small fragments of the first-order theory of Young's lattice; for example, we would like to characterize the complexity of the $\Sigma_1$-theory with or without constants and the $\Sigma_2$-theory with a single constant for the partition $1+1$. While it would be desirable to utilize the analogous results already attained for small fragments of the subword order with constants over two-letter alphabets \cite[Thm 3.3, Cor 3.6]{halfon}, the exact link between the two orders remains undetermined and we end by stating our conjectures.

%We look a little closer at a particular fragment of the first-order theory of Young's lattice and show that undecidability is already present in the %$\Sigma_1$-theory with constants (Theorem~\ref{thm:sigma1constants}), but the $\Sigma_1$-theory without constants is decidable (Theorem~\ref{thm:sigma1pure}). The %undecidability result here is not established by an interpretation of arithmetic, but by utilizing the analogous result already shown for the $\Sigma_1$-theory of %the subword order with constants over a two-letter alphabet \cite[Thm 3.3]{halfon}. It is conjectured that the $\Sigma_2$-theory with a single constant for the %partition $1+1$ is undecidable. 

Even though the results in this paper are about first-order definable relations, our arguments may be said to be combinatorial in that we do not require any specialized knowledge of mathematical logic nor model theory and may be read conveniently without any such background. Concentrating more on the combinatorial aspect, the topic and arguments of this paper can be pursued for the wider class of differential posets where it may be possible to prove similar results. 

In a broader setting, the topic of definability in the order relation of Young's lattice is related to the subject of positive definability in the substructure relation of finite structures in Je\v{z}ek and McKenzie \cite{semilat,poset,distr,latt} and Wires \cite{wires}. For a fixed class of structures, the isomorphic substructure relation defines a partial-order on the isomorphism types of the finite structures in that class. We can then view any analysis or characterization of the first-order definable relations in this ordered set as yielding a characterization of the first-order expressive power of the substructure relation applied to the finite structures. Since an unordered partition of a positive integer can also be interpreted as a finite equivalence relation, the order relation in Young's lattice is the same as the isomorphic substructure relation among finite equivalence relations. This means that the results of this paper can be seen as establishing positive definability for the class of equivalence relations. Since it is easy to see that the partial order on partitions is a well quasi-order, it follows from A.I. Mal'cev \cite{malcev} that the universal class generated by any ideal is finitely axiomatizable; that is, finitely axiomatizable by universal sentences. This complements similar work on ordered structures like posets and distributive lattices \cite{semilat,poset,distr,latt} and the unordered structure of simple graphs \cite{wires} and Thinniyam \cite{thinn3}.

After setting notation and reviewing preliminaries in Section~\ref{sec:2}, the interpretation of arithmetic is developed in Section~\ref{sec:3} and the maximal definability property is then established in Section~\ref{sec:4}. We end in Section~\ref{sec:5} with a few open questions.

%%%%%%%%%%%%%%%%%%%%%%%%%%%%%%%%%%%%%%%%%%%%%%%%%%%%%%%%%%%%%%%%%%%%%%%%%%%%%%%%%%%%%%%%%%%%%%%%%%%%%              

\section{Preliminaries}\label{sec:2}

Arithmetic partitions will be denoted by lower-case Greek letters. If $\pi$ is a partition of $n \in \mathds{N}$, then we can represent $\pi$ as a finite sequence $\pi=(n_1,\ldots,n_t,)$ where $n=n_1 + \cdots + n_t$, each summand $n_{i}$ is called a \emph{part} and $1 \leq n_{i+1} \leq n_{i}$. It is standard to define the empty partition as the unique partition which has no parts. Each partition has a corresponding Young diagram and the partial-order $\leq$ determined by containment of the diagrams can be equivalently described using the descending sequence representation: given $\pi=(n_1,\ldots,n_t)$ and $\sigma=(s_1,\ldots,s_r)$, we have
\begin{align}\label{eqn:1}
\sigma \leq \pi \quad \Leftrightarrow \quad r \leq t \ \text{ and } \ s_i \leq n_i \ \text{ for all } \ i \leq r.
\end{align}
From the above, it is easy to see that $\leq$ is indeed a partial-order on the set of partitions $\mathcal P$, and the ordered set $\textbf{Y}=\left\langle \mathcal P,\leq \right\rangle$ is known as \emph{Young's lattice}. As the name suggests, $\textbf{Y}$ is in fact a lattice, a distributive lattice to be precise, but this fact will not be instrumental in our development.

If $\sigma < \pi$ but there does not exist $\rho$ such that $\sigma < \rho < \pi$, then we write $\sigma \prec \pi$ and say $\pi$ covers $\sigma$, or that $\sigma$ is a subcover of $\pi$. We write $|\pi|=n$ if $\pi$ is a partition of the positive integer $n$ and refer to $|\pi|$ as the cardinality of $\pi$. It is immediate from \eqref{eqn:1} that $\pi \prec \rho$ if and only if $\left|\pi\right| + 1= \left|\rho\right|$ and $\pi \leq \rho$. It follows that $\textbf{Y}$ is graded by cardinality in that partitions have the same cardinality if and only if they are at the same height in $\textbf{Y}$.

For any partition $\pi \in \mathcal P$, the transpose of the corresponding Young diagram defined by interchanging the rows and columns produces the Young diagram of another partition denoted by $\pi^{\partial}$ and called the \emph{conjugate} of $\pi$. The conjugation map $\pi \mapsto \pi^{\partial}$ is an automorphism of $\textbf{Y}$.

For our needs, it will be necessary to deviate from standard notation and introduce a different summation representation for integer partitions. It will be important to have a convenient way of recording the number of times a part of a given size appears in the partition. The \emph{canonical representation} of $\pi \in \mathcal P$ is $\pi = \sum_{i=1}^{t} m_i[n_i]$ where $[n_i]$ stands for a part $n_i$ in $\pi$ and the coefficient $m_i$ means the part $n_i$ appears exactly $m_i$ times in the partition; for example, the partition $\pi=(6,6,5,4,4,4,3,3,2,1,1,1,1)$ has the canonical representation
\[
\pi = 2[6] + [5] + 3[4] + 2[3] + [2] + 4[1].
\]
Notice we still maintain the convention that $n_{i+1} < n_{i}$. It is clear that $n \geq m$ if and only if $[n] \geq [m]$, but there is a distinction in that the first inequality is among positive integers and the second is the partial order of Young's lattice. The notation is inspired by the dual role of a partition as an equivalence relation, and so the summand $m_i[n_i]$ reflects an equivalence block of cardinality $n_i$ which appears exactly $m_i$ times.

A great deal of our argument involves showing certain relations in $\textbf{Y}$ are first-order definable by formulas with a special syntax. We recommend Burris and Sankappanavar \cite[Sec V.1]{univ} for a quick review of the basic notions of structure and first-order satisfaction, but \cite{flum} as an accessible and thorough textbook. For a structure $\left\langle A, \tau \right\rangle$, a relation $R \subseteq A^{k}$ is first-order definable if there is a first-order formula $\phi(x_1,\ldots,x_k)$ with free variables among $\{x_1,\ldots,x_k\}$ such that
\[
(\pi_1,\ldots,\pi_k) \in R \quad \Leftrightarrow \quad \left\langle A, \tau \right\rangle \vDash \phi(\pi_1,\ldots,\pi_k).
\]
As an example, for any ordered set $\left\langle P, \leq \right\rangle$ the covering relation $\mathrm{Cov}=\{(x,y): x \prec y\}$ is first-order definable by the formula
\begin{align}\label{eqn:2}
\phi(x,y)_{\mathrm{Cov}}:=x \leq y \mathrel{\bigwedge} x \neq y \mathrel{\bigwedge} \forall z \left(x \leq z \leq y \longrightarrow x = z \bigvee y = z\right).
\end{align}
If $\left\langle P, \leq \right\rangle = \left\langle \mathds{N}, \leq \right\rangle$ then $\mathrm{Cov}=\{(n,n+1): n \in \mathds{N}\}$, but for the rationals $\mathrm{Cov}=\emptyset$. 

For any structure $\left\langle A, \tau \right\rangle$, let $\mathrm{Def}(A, \tau)$ denote the set of first-order definable relations in the structure.

A classic result states that any first-order formula is logically equivalent to a formula with an explicit syntax of the form
\begin{align}\label{eqn:3}
\bar{Q}_{n}\bar{y}_{n}\bar{Q}_{n-1}\bar{y}_{n-1} \cdots \bar{Q}_{1}\bar{y}_{1}\phi(x_1,\ldots,x_k,\bar{y}_1,\ldots,\bar{y}_n)
\end{align}
where
\begin{itemize}

	\item each $\bar{Q}_i$ is a finite sequence of a single quantifier $\exists$ or $\forall$,
	
	\item the quantifiers alternate in the index, and
	
	\item $\phi(x_1,\ldots,x_k,\bar{y}_1,\ldots,\bar{y}_n)$ is an \emph{open formula} - a formula which contains no quantifiers.

\end{itemize}
A formula is in \emph{prenex} form if the syntax has the form in Eqn~\ref{eqn:3}.	Define $\Sigma_n$	to be the set of formulas which have a prenex form where the quantifiers $\bar{Q}_n$ are all existential, and $\Pi_n$ are the formulas which have a prenex form where the quantifiers $\bar{Q}_n$ are all universal. Define $\Delta_n = \Sigma_n \cap \Pi_n$ and note $\Delta_0 = \Sigma_0= \Pi_0$ refers to the set of open formulas. The following inclusions
\[
\Pi_i \subseteq \Pi_{i+1} \quad , \quad  \Sigma_{i} \subseteq \Sigma_{i+1} \quad , \quad \Pi_{i-1} \subseteq \Sigma_{i} \subseteq \Pi_{i+1}
\] 
are immediate from the definitions and it can naively be said that complexity increases with the indices. Definability can then be restricted to the syntax; for example, a relation is $\Sigma_n$-definable if it is definable by a $\Sigma_n$-formula. The formula in Eqn~\ref{eqn:2} shows the covering relation is $\Pi_1$-definable in any poset.

One way in which more complex formulas are produced	is through the use of implication; for example, suppose a subset of partitions $A \subseteq \mathcal P$ is definable by a $\Pi_1$-formula	$\psi(x)$ and $\pi \in \mathcal P$ is a fixed partition. Then
\begin{align}\label{eqn:4}
 \psi(x) \mathrel{\bigwedge} x \leq \pi \mathrel{\bigwedge} \left( \forall y \left( \psi(y) \mathrel{\bigwedge} y \leq \pi  \longrightarrow  y \leq x \right)  \right) 
\end{align}
defines the set of partitions which are maximal among the partitions in $A$ which are below $\pi$. This is now a $\Pi_2$-formula since the implication is in the range of the universal quantifier for $y$, and because the universal quantifiers which are nested in $\psi(y)$ reside in the precedent of the implication, the implication is logically equivalent to a formula with existential quantifiers.

Fix a structure $\left\langle A, \tau \right\rangle$. For any set $B$ and map $\psi: B \rightarrow A$, we can define a $\tau$-structure on the set $B$ by pulling back the relations of $\left\langle A, \tau \right\rangle$ in the following manner: for each k-ary relation $R \in \tau$ define a new relation $R_{\phi}$ on $B$ by 
\[
(b_1,\ldots,b_k) \in R_{\phi} \quad \Leftrightarrow \quad  (\psi(b_1),\ldots,\psi(b_k)) \in R.
\]
We will be interested in the case when $\left\langle A, \tau \right\rangle = \left\langle \mathds{N}, +, \times \right\rangle$.

\begin{definition}(\cite{kudinov})
A structure $\left\langle A, \tau \right\rangle$ is \emph{arithmetic} if there is a bijection $\#: A \rightarrow \mathds{N}$ such that $\mathrm{Def}(A,\tau) \subseteq \mathrm{Def}(A, +_{\#},\times_{\#})$.

An arithmetic structure $\left\langle A, \tau \right\rangle$ has the \emph{maximal definability property} if $\mathrm{Def}(A, \tau) = \mathrm{Def}(A, +_{\#}, \times_{\#})$
\end{definition}

%%%%%%%%%%%%%%%%%%%%%%%%%%%%%%%%%%%%%%%%%%%%%%%%%%%%%%%%%%%%%%%%%%%%%%%%%%%%%%%%%%%%%%%%%%%%%%%%%%%%%              
                            
\section{Arithmetic}\label{sec:3}

The main goal of this section is to establish a particular interpretation of natural arithmetic using formulas of small complexity culminating in Theorem~\ref{thm:sigma1}. A partition $\pi=[n]$ with a single part is called \emph{total}, while partitions of the form $\pi=m[1]$ in which each part has size one are \emph{trivial}. Since Young's lattice includes the empty partition, we declare that the empty partition is both total and trivial. In $\textbf{Y}$, the empty partition $\emptyset$ has the $\Pi_1$-definition $\forall y(x \leq y)$ since it is the smallest element. The trivial partition $[1]$ is the only cover of $\emptyset$ which is then $\Pi_1$-definable using Eqn~\ref{eqn:2}. Now the set $\{[2],[1]+[1]\}$ is $\Pi_1$-definable as the covers of $[1]$, but there can be no way to define either $[2]$ or $[1]+[1]$ separately in a first-order way since $[2]^{\partial}=[1]+[1]$. We shall add the constant $[1]+[1]$ to the language and refer to the new structure $\textbf{Y}^{\ast} = \left\langle  \mathcal P, \leq , [1]+[1] \right\rangle$. In what follows, definability will usually refer to formulas built from the partial-order $\leq$ of Young's lattice together with the particular partition $[1]+[1]$.

\begin{lemma}\label{ecomptriv}
The set of total partitions are $\Delta_0$-definable and the trivial partitions are $\Pi_1$-definable in $\textbf{Y}^{\ast}$
\end{lemma}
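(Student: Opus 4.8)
The plan is to write down explicit defining formulas and then verify them against the descending‑sequence criterion \eqref{eqn:1}. For the total partitions I would use the quantifier‑free formula $\neg\left([1]+[1] \le x\right)$. The point is that, writing $\pi = (n_1,\dots,n_t)$, the criterion \eqref{eqn:1} gives $[1]+[1]=(1,1)\le \pi$ exactly when $t\ge 2$, the inequalities $1\le n_1$ and $1\le n_2$ being automatic; so $[1]+[1]\le x$ holds precisely for partitions with at least two parts, i.e.\ precisely the non‑total ones, and $\emptyset$ (having no parts) is correctly left among the total partitions. Since this formula has no quantifiers it is $\Delta_0$ in $\textbf{Y}^{\ast}$.

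For the trivial partitions I would use
\[
 \mathrm{triv}(x) \;:=\; \forall z \left( z \le x \;\longrightarrow\; z \le [1]+[1] \;\bigvee\; [1]+[1] \le z \right),
\]
which in prenex form, $\forall z\left(z \not\le x \;\bigvee\; z \le [1]+[1] \;\bigvee\; [1]+[1] \le z\right)$, is one universal quantifier over a quantifier‑free matrix, hence $\Pi_1$ in $\textbf{Y}^{\ast}$. It then remains to check that this defines exactly the trivial partitions. If $x=m[1]$ then by \eqref{eqn:1} every $z\le x$ has the form $k[1]$ with $k\le m$; such a partition satisfies $k[1]\le [1]+[1]$ when $k\le 2$ and $[1]+[1]\le k[1]$ when $k\ge 2$, so the implication holds for all $z$ and $\mathrm{triv}(x)$ is true. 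Conversely, if $x$ is not trivial its largest part has size at least $2$, so $[2]\le x$ by \eqref{eqn:1}, while $[2]$ is incomparable with $[1]+[1]$ — neither $[2]\le[1]+[1]$ nor $[1]+[1]\le[2]$ holds, again by \eqref{eqn:1} — so $z=[2]$ witnesses the failure of $\mathrm{triv}(x)$. The empty partition needs no separate treatment, since $\emptyset\le[1]+[1]$ and $\emptyset$ is trivial by convention.

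The only real point to get right is the choice of formula in the second part. Unlike the total case there is no quantifier‑free definition: in the language $\left\langle \le,[1]+[1]\right\rangle$ the constant $[1]+[1]$ detects ``at least two parts'' but not ``some part of size at least two,'' and a naive route — first isolating the partition $[2]$ (say as the minimum partition incomparable with $[1]+[1]$, which is only $\Pi_1$) and then asserting $[2]\not\le x$ — costs an extra quantifier alternation and yields only $\Pi_2$. The device that keeps the bound at $\Pi_1$ is to exploit that the trivial partitions form a chain running through $[1]+[1]$: every partition below a trivial partition is comparable with $[1]+[1]$, whereas every non‑trivial partition has the $[1]+[1]$‑incomparable partition $[2]$ beneath it, so comparability of all lower elements with the single constant $[1]+[1]$ is exactly the right test.
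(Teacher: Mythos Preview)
Your proof is correct. For the total partitions you give exactly the paper's formula $\neg([1]+[1]\le x)$. For the trivial partitions you take a slightly different route: the paper first argues that the singleton $\{[2]\}$ is $\Pi_1$-definable (as the unique cover of $[1]$ other than the constant $[1]+[1]$) and then declares the trivial partitions to be those $\pi$ with $[2]\nleq\pi$, whereas you bypass the intermediate definition of $[2]$ and write down the single $\Pi_1$-formula ``every $z\le x$ is comparable with $[1]+[1]$.'' Your version has the advantage of making the $\Pi_1$ bound transparent; as you yourself observe, naively substituting a $\Pi_1$-definition of $[2]$ into ``$[2]\nleq x$'' costs an alternation, and one has to unwind the minimality of $[2]$ among $[1]+[1]$-incomparables to recover $\Pi_1$---which is exactly what your formula does in one step. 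The paper's route, on the other hand, isolates $[2]$ as a reusable $\Pi_1$-defined constant for later arguments.
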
   
\begin{proof}
A partition $\pi$ is total if and only if $\pi \ngeq [1]+[1]$. Since $\{[2],[1]+[1]\}$ is $\Pi_1$-definable, with the constant $[1]+[1]$ in the language, we have that $\{[2]\}$ is $\Pi_1$-definable. Then the set of trivial partitions consists of those $\pi$ such that $\pi \ngeq [2]$. 
\end{proof}

A partition $\pi$ \textit{rectangular} if all parts have the same size; thus, a rectangular partition has the form $\pi = m[n]$.

\begin{lemma}\label{lem:rect}
$\pi$ has a unique lower cover if and only if $\pi$ is rectangular; consequently, the set of uniform partitions is $\Delta_2$-definable without constants.
\end{lemma}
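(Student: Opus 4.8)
The plan is to prove the combinatorial equivalence first and then extract the definability claim. For the forward direction of the characterization, I would argue the contrapositive: if $\pi$ is not rectangular, then in its canonical representation $\pi = \sum_{i=1}^{t} m_i[n_i]$ we have $t \geq 2$. Using the descending-sequence description \eqref{eqn:1}, one sees that a lower cover of $\pi$ is obtained by deleting a single cell from the end of some row whose removal still leaves a valid partition; concretely, one may delete a cell from the last occurrence of the largest part $n_1$, or from the last occurrence of the smallest part $n_t$, and when $t \geq 2$ these two operations produce two distinct partitions $\rho_1 \neq \rho_2$ with $\rho_j \prec \pi$. Hence $\pi$ has at least two lower covers. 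Conversely, if $\pi = m[n]$ is rectangular, then removing a cell from any row other than the last one violates the weakly-decreasing condition, so the only lower cover is $(m-1)[n] + [n-1]$ (interpreting degenerate cases via the empty partition), giving uniqueness. The empty partition and the totals are handled as trivial boundary cases: $\emptyset$ has no lower cover, and I would either exclude it explicitly or note that "unique lower cover" already fails for it, consistent with $\emptyset$ being rectangular only by the stated convention — I would state the lemma for nonempty $\pi$ to avoid this nuisance.

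The main obstacle is the second, definability, assertion, and here I suspect there is a terminological slip in the statement: "uniform" should read "rectangular", since that is the property just characterized. Granting that, the argument is as follows. The property "$\pi$ has a unique lower cover" is expressed by: there exists $\sigma$ with $\sigma \prec \pi$, and for all $\tau$, if $\tau \prec \pi$ then $\tau = \sigma$. Since $\prec$ is $\Pi_1$-definable by \eqref{eqn:2}, the inner part $\forall \tau(\tau \prec \pi \to \tau = \sigma)$ is $\Pi_2$ (the $\Pi_1$ covering formula sits in the antecedent, contributing existential quantifiers in the scope of $\forall \tau$, so the whole block is $\Pi_2$), and prefixing $\exists \sigma$ together with the conjunct $\sigma \prec \pi$ keeps us at $\Sigma_2$ relative to that — more carefully, $\exists \sigma (\Pi_1 \wedge \Pi_2)$ is $\Sigma_3$ naively, so I would instead observe that $\sigma \prec \pi$ can be absorbed: write the defining formula as $\exists \sigma \forall \tau \big( \sigma \prec \pi \wedge (\tau \prec \pi \to \tau = \sigma)\big)$, which after pulling the $\Pi_1$ quantifiers of $\prec$ out is $\Sigma_2$. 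Thus the rectangular partitions are $\Sigma_2$-definable without constants.

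For the $\Pi_2$ side, giving $\Delta_2$-definability, I would use the characterization in the other direction: $\pi$ is rectangular iff $\pi$ does \emph{not} have two distinct lower covers, i.e. $\forall \sigma \forall \rho\big((\sigma \prec \pi \wedge \rho \prec \pi) \to \sigma = \rho\big)$, which is $\Pi_2$ by the same quantifier-counting (the two $\Pi_1$ covering formulas occur negatively, contributing existentials under the two leading universals). Combining the $\Sigma_2$ and $\Pi_2$ definitions yields $\Delta_2$-definability, and since neither formula uses the constant $[1]+[1]$, this is without constants. I would close by remarking that the equivalence of the two displayed formulas is exactly the combinatorial content of the first half of the lemma, so no separate verification is needed.
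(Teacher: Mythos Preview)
Your combinatorial argument for the equivalence is correct and is essentially the same as the paper's: both exhibit two distinct lower covers when $\pi$ is not rectangular (you delete a cell from the last copy of the largest part versus the last copy of the smallest part; the paper deletes from the largest part versus some strictly smaller part) and both observe that $m[n]$ has the single lower cover $(m-1)[n]+[n-1]$. Your $\Pi_2$ formula $\forall\sigma\forall\rho\big((\sigma\prec\pi\wedge\rho\prec\pi)\to\sigma=\rho\big)$ is also exactly the paper's.

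The gap is in your $\Sigma_2$ argument. The formula
\[
\exists\sigma\,\forall\tau\big(\sigma\prec\pi\ \wedge\ (\tau\prec\pi\to\tau=\sigma)\big)
\]
is \emph{not} $\Sigma_2$ once you put it in prenex form. The second occurrence of $\prec$ sits in the antecedent of the implication, so its hidden universal quantifier flips to an existential; that existential lies in the scope of $\forall\tau$, producing a quantifier pattern $\exists\sigma\,\forall\tau\,\forall z_1\,\exists z_2$, i.e.\ $\Sigma_3$. ``Pulling the $\Pi_1$ quantifiers of $\prec$ out'' does not collapse this alternation.

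The paper sidesteps the problem by abandoning the covering relation inside the universal quantifier and instead asserting directly that the open interval below $\pi$ has a greatest element:
\[
\exists x^{\ast}\Big(x^{\ast}<x\ \wedge\ \forall y\big(y\leq x\ \to\ y\leq x^{\ast}\ \vee\ y=x\big)\Big).
\]
Here the matrix is quantifier-free, so the formula is honestly $\Sigma_2$; and ``the principal ideal minus the top has a maximum'' is equivalent to ``there is a unique lower cover'', which is exactly the combinatorial content you have already established. Your approach can be repaired along the same lines by replacing the inner clause $\tau\prec\pi\to\tau=\sigma$ with the quantifier-free $\tau<\pi\to\tau\leq\sigma$, after which $\sigma\prec\pi$ can be weakened to $\sigma<\pi$ and the whole thing becomes $\exists\sigma\,\forall\tau(\text{open})$.
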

\begin{proof}
For necessity, notice that if $\pi = m[n]$, then $\rho \prec \pi$ implies $\rho = (m-1)[n] + [n-1]$.

If $\pi$ is not uniform, then we can write $\pi = [n] + \sum_{i=1}^{r} [s_i]$ where $n \geq s_i$ and there exists $s_k$ such that $n > s_k$. Then $\sigma=[n-1] + \sum_{i=1}^{r} [s_{i}]$ and $\rho=[n] + [s_{k}-1] + \sum_{i \neq k} [s_{i}]$ are subcovers of $\pi$ which are incomparable since $\rho$ contains the same number of parts of size $n$ that $\pi$ has, but $\sigma$ does not.

The set of rectangular partitions is then definable since the the formula
\[
\forall y \forall z \left( \phi_{\mathrm{Cov}}(y,x) \bigwedge \phi_{\mathrm{Cov}}(z,x) \longrightarrow y=z \right)
\]
defines the property that $x$ has a unique lower cover. This is a $\Pi_2$-definition since the covering relation is $\Pi_1$-definable and is contained in the precedent of the implication. That $x$ has a unique lower cover can also be defined by the $\Sigma_2$ definition
\[
 \exists x^{\ast} \left(x^{\ast} \leq x \bigwedge x \neq x \bigwedge \forall y \left( y \leq x \longrightarrow y \leq x^{\ast} \bigvee y = x \right) \right);
\]
therefore, the set of rectangular partitions is $\Delta_2$-definable.
\end{proof}

The next result has a simple proof if we first introduce the terminology of reconstruction. For any digraph $G$, a vertex-deleted subgraph is the induced subgraph which results after deleting a single vertex from the vertex set. Let $\mathcal H$ be a class of finite digraphs closed under taking induced subgraphs. The Strong Reconstruction Conjecture for $\mathcal H$ states that every digraph from $\mathcal H$ on at least four vertices is uniquely determined by its set of vertex-deleted subgraphs. The conjecture is known to fail when $\mathcal H$ is the full class of digraphs or tournaments \cite{stock}, but it was shown to hold by Pretzel and Siemons \cite{eqrec} for the class of equivalence relations. When viewed as integer partitions, Strong Reconstruction states that every partition of cardinality at least four is uniquely determined by its set of lower covers in Young's lattice.

\begin{proposition}\label{indivdefine}
Every partition is first-order definable in $\textbf{Y}^{\ast}$; consequently, the conjugation map is the unique nontrivial automorphism of Young's lattice.
\end{proposition}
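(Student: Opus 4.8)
The plan is to induct on the cardinality $|\pi|$, using the Strong Reconstruction result of Pretzel and Siemons as the engine once the base cases are dispatched. The base cases are the partitions of cardinality at most three: $\emptyset$, $[1]$, $[2]$, $[1]+[1]$, $[3]$, $[2]+[1]$, $[1]+[1]+[1]$. Each of these is already handled by the earlier development — $\emptyset$ and $[1]$ are $\Pi_1$-definable outright, $[1]+[1]$ is the constant, $[2]$ is definable by Lemma~\ref{ecomptriv}, the trivial partitions $[1]+[1]+[1]$ and generally $m[1]$ are $\Pi_1$-definable, and $[3]$ is total so it is among the $\ngeq [1]+[1]$ partitions; $[2]+[1]$ can then be pinned down as the unique cardinality-$3$ partition distinct from $[3]$ and $[1]+[1]+[1]$ (cardinality is definable since $\textbf{Y}$ is graded and the grading is recovered from the covering relation). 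So in $\textbf{Y}^{\ast}$ every partition of cardinality $\le 3$ is first-order definable.

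For the inductive step, fix $\pi$ with $|\pi| = n \ge 4$ and assume every partition of smaller cardinality is definable by some first-order formula (with the constant $[1]+[1]$). Let the lower covers of $\pi$ be $\rho_1,\dots,\rho_k$; these all have cardinality $n-1$, hence each $\rho_j$ is defined by a formula $\delta_{\rho_j}(x)$. I would then write a formula saying "$x$ has cardinality $n$, and the set of lower covers of $x$ is exactly $\{\rho_1,\dots,\rho_k\}$", i.e.
\[
\mathrm{card}_n(x) \mathrel{\bigwedge} \forall y\Bigl( \phi_{\mathrm{Cov}}(y,x) \longrightarrow \bigvee_{j=1}^{k} \delta_{\rho_j}(y) \Bigr) \mathrel{\bigwedge} \bigwedge_{j=1}^{k} \exists y \bigl( \delta_{\rho_j}(y) \mathrel{\bigwedge} \phi_{\mathrm{Cov}}(y,x) \bigr).
\]
By the Strong Reconstruction Conjecture for equivalence relations — which, as noted in the excerpt, is a theorem — the multiset, and in particular the set, of lower covers determines $\pi$ uniquely among partitions of cardinality $n \ge 4$; so this formula defines exactly $\{\pi\}$. (One subtlety worth a sentence: reconstruction is usually phrased with the multiset of vertex-deleted subgraphs, but for partitions a given partition $\sigma$ of cardinality $n-1$ occurs as a lower cover of $\pi$ with multiplicity at most... actually we only need the set version, and the set of lower covers still determines $\pi$ for $n \ge 4$ because two partitions with the same set of lower covers would be a reconstruction counterexample.) This completes the induction, so every partition is first-order definable in $\textbf{Y}^{\ast}$.

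The consequence about automorphisms follows quickly. Conjugation is an automorphism of $\textbf{Y}$ as recalled in Section~\ref{sec:2}. Conversely, let $\alpha$ be any automorphism of $\textbf{Y}=\langle \mathcal P,\le\rangle$. Since $[1]$ is the unique atom it is fixed; the pair $\{[2],[1]+[1]\}$ is setwise fixed (it is the set of covers of $[1]$), so $\alpha$ either fixes both or swaps them. If $\alpha$ fixes $[1]+[1]$, then $\alpha$ is an automorphism of $\textbf{Y}^{\ast}$; but every element of $\textbf{Y}^{\ast}$ is definable, and definable elements are fixed by every automorphism, so $\alpha$ is the identity. If $\alpha$ swaps $[2]$ and $[1]+[1]$, then $\alpha \circ (\,\cdot\,)^{\partial}$ fixes $[1]+[1]$ (conjugation also swaps $[2]$ and $[1]+[1]$), hence by the previous case $\alpha\circ(\,\cdot\,)^{\partial} = \mathrm{id}$, i.e. $\alpha = (\,\cdot\,)^{\partial}$. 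So conjugation is the unique nontrivial automorphism.

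The main obstacle is making sure the appeal to reconstruction is legitimate in exactly the form needed: one must confirm that the set (not merely multiset) of lower covers in Young's lattice suffices to determine a partition of cardinality $\ge 4$, and that cardinality itself — needed to separate the covers of $\pi$ from spurious solutions and to anchor the induction — is first-order definable without constants (which it is, via the graded structure coming from $\phi_{\mathrm{Cov}}$, although expressing "cardinality $n$" uniformly requires a short argument that the $n$-fold iterated cover relation up from $\emptyset$ is expressible for each fixed $n$). Everything else is bookkeeping: the formulas produced are concrete, their complexity is irrelevant to this proposition, and the automorphism argument is a standard "definable points are rigid" deduction.
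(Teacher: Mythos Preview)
Your proposal is correct and follows essentially the same approach as the paper: induction on cardinality with the base cases $|\pi|\le 3$ handled directly, the inductive step driven by the Pretzel--Siemons Strong Reconstruction result (in its set-of-lower-covers form), and the automorphism consequence deduced via the standard ``definable points are fixed'' argument after splitting on whether $\alpha$ fixes or swaps $[2]$ and $[1]+[1]$. The only cosmetic differences are that the paper pins down $[2]+[1]$ as the partition with both $[2]$ and $[1]+[1]$ as subcovers (rather than via cardinality), and writes the inductive formula as a single bi-implication $\forall y\bigl(\phi_{\mathrm{Cov}}(y,x)\leftrightarrow \bigvee_j \phi_j(y)\bigr)$, which already forces the cardinality constraint and makes your $\mathrm{card}_n(x)$ clause redundant.
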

\begin{proof}
First, we observe that partitions with cardinality at most three are $\Pi_1$-definable. By the first paragraph of this section, every partition of cardinality at most two has a $\Pi_1$-definition. The partitions $[3]$ and $[1]+[1]+[1]$ can then be recovered as having the unique subcovers $[2]$ and $[1]+[1]$, respectively. Then $[2]+[1]$ has both $[2]$ and $[1]+[1]$ as subcovers. 

Now assume, every partition at height $n-1\geq 3$ as a first-order definition in $\textbf{Y}^{\ast}$. Suppose $\pi$ has height $n$ and let $\{\sigma_1,\ldots,\sigma_k\}$ be the set of lower covers of $\pi$. By induction, there are formulas $\phi_{1}(x_{1}),\ldots,\phi_{k}(x_{k})$ such that $\sigma_i$ is the unique element in Young's lattice which satisfies the formula $\phi_{i}(x_{i})$. Then Strong Reconstruction implies that $\pi$ uniquely satisfies $\forall y \left(\mathrm{Cov}(y,x) \longleftrightarrow \phi_1(y) \vee \cdots \vee \phi_k(y) \right)$.

We show conjugation $\partial:\textbf{Y} \rightarrow \textbf{Y}$ is the unique non-trivial automorphism of $\textbf{Y}$. Suppose $f$ is another automorphism. Since the set $\{[2],[1]+[1]\}$ is first-order definable, it is closed under $\tau$. Suppose $f([1]+[1])=[2]$. Then $f^{-1}\partial$ is an automorphism which fixes $[1]+[1]$. For any partition $\pi$, there is a first-order formula $\phi_{\pi}(x,y)$ such that $\pi$ is the unique element in $\textbf{Y}$ such that $\textbf{Y} \vDash \phi_{\pi}(\pi,[1]+[1])$. Let $R$ be the binary relation defined by the formula $\phi_{\pi}(x,y)$. Then $(\pi,[1]+[1]) \in R$ implies $(f^{-1}\partial(\pi),[1]+[1])= (f^{-1}\partial(\pi),f^{-1}\partial([1]+[1])) \in R$; thus, by uniqueness we must have $f^{-1}\partial(\pi)=\pi$, and so $\partial(\pi)=f(\pi)$. This implies $f=\partial$. If it were the case that $f$ fixes $[1]+[1]$, then the same argument would show $f$ is the identity map.
\end{proof}

We define two functions. For a partition $\pi$, let $l(\pi)$ equal the number of parts in $\pi$. This will be referred to as the length of the partition. Set $b(\pi)=n$ if $[n]$ is the largest part in a partition.

\begin{lemma}\label{lem:unif}
We have the following:
\begin{enumerate}

	\item $\left\{ (\rho,\pi) : \rho = m[1], l(\pi) = m, m \geq 1 \right\}$ is $\Pi_1$-definable in $\textbf{Y}^{\ast}$;
	
	\item $\left\{ (\rho,\pi) : \rho = [n] \hbox{ all parts of } \pi \hbox{ have at most $n$ elements }\right\}$ is $\Delta_0$-definable in $\textbf{Y}^{\ast}$;
	
	\item $\left\{ (\rho,\sigma,\pi) : \rho = [m], \sigma = n[1], \pi = n[m] \right\}$ is $\Delta_2$-definable in $\textbf{Y}^{\ast}$.

\end{enumerate}
\end{lemma}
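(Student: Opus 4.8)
The plan is to handle all three parts by the same device: realize the partition of prescribed shape as the $\le$-extremal partition of that shape relative to the other argument, and then read off the complexity from predicates already in hand --- ``total'' is $\Delta_0$ and ``trivial'' is $\Pi_1$ by Lemma~\ref{ecomptriv}, ``rectangular'' (equivalently, ``has a unique lower cover'') is $\Delta_2$ by Lemma~\ref{lem:rect}, and the covering relation $\prec$ is $\Pi_1$ by Eqn~\ref{eqn:2}. For (2), the combinatorial input is that the total partitions below $\pi$ are precisely $\emptyset \prec [1] \prec \cdots \prec [b(\pi)]$, so every part of $\pi$ has at most $|\rho|$ elements if and only if every total partition below $\pi$ lies below $\rho$; hence the relation is defined by ``$\rho$ is total, and every $y \le \pi$ which is total satisfies $y \le \rho$'', and since ``$z$ is total'' is $\Delta_0$ and the quantifier over $y$ is bounded by $\pi$ this is a $\Delta_0$-formula.

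For (1) the input is that a trivial partition $\rho = m[1]$ satisfies $\rho \le \pi$ exactly when $\pi$ has at least $m$ parts, so the relation asserts precisely that $\rho$ is a nonempty trivial partition, $\rho \le \pi$, and $\rho$ is $\le$-maximal among the trivial partitions below $\pi$. First I would note that ``$\rho$ is a nonempty trivial partition'' is $\Pi_1$, by conjoining triviality (Lemma~\ref{ecomptriv}) with the $\Pi_1$ statement ``$[1]+[1] \le \rho$, or $[1]+[1]$ covers $\rho$'' --- the second disjunct is $\Pi_1$ by Eqn~\ref{eqn:2}, since $[1]$ is the unique lower cover of the constant. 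The delicate part is the maximality clause: the obvious rendering, ``every trivial $\tau \le \pi$ satisfies $\tau \le \rho$'', puts the $\Pi_1$ predicate ``trivial'' in the antecedent of an implication bound by a universal quantifier, which by the remark after Eqn~\ref{eqn:4} only yields $\Pi_2$. To keep it $\Pi_1$ I would instead exploit that $m[1]$ has exactly the two covers $(m+1)[1]$ and $[2]+(m-1)[1]$, the first trivial and the second having a part of size at least $2$; thus maximality of $\rho$ among trivial partitions below $\pi$ is equivalent to saying that every cover of $\rho$ lying below $\pi$ has a part of size at least $2$, a statement one can arrange to keep at $\Pi_1$, since the covers of $\rho$ below $\pi$ range over the bounded interval $[\rho,\pi]$ and ``has a part of size at least $2$'' is the $\Sigma_1$ negation of triviality.

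For (3) I would use that a rectangular partition $n[m]$ is determined among all partitions by being rectangular, having largest part $m$, and having exactly $n$ parts. So with $\rho = [m]$ (total, hence $|\rho| = m$) and $\sigma = n[1]$ (trivial, hence $|\sigma| = n$), the triple $(\rho,\sigma,\pi)$ lies in the relation if and only if $\pi$ is rectangular, $\rho \le \pi$ and $(\rho,\pi)$ lies in the relation of part (2) (forcing the largest part of $\pi$ to be $m$), and $(\sigma,\pi)$ lies in the relation of part (1) (forcing $\pi$ to have exactly $n$ parts). Rectangularity is $\Delta_2$ by Lemma~\ref{lem:rect}, the relation of (1) is $\Pi_1 \subseteq \Delta_2$, the relation of (2) together with $\rho \le \pi$ is $\Delta_0 \subseteq \Delta_2$, and $\Delta_2$ is closed under conjunction; so the conjunction is $\Delta_2$.

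The main obstacle is the complexity bookkeeping rather than the combinatorics, and within it the subtle point is part (1): arranging the ``maximal among trivial partitions'' clause so that the defining formula is genuinely $\Pi_1$ and not merely $\Pi_2$ --- which is exactly why one routes that clause through the finitely many covers of $\rho$ instead of quantifying over the $\Pi_1$-definable class of trivial partitions. I would also check the degenerate cases directly (the empty partition, and cardinalities small enough that the constant $[1]+[1]$ and its lower cover $[1]$ are the only available handles).
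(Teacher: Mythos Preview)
Your combinatorial characterizations coincide with the paper's: it writes (1) as ``$m[1]\le\pi$ but $(m+1)[1]\nleq\pi$'', (2) as ``$[m]\nleq\pi$ for $[m]>[n]$'', and assembles (3) exactly as you do, by conjoining totality of $\rho$, triviality of $\sigma$, rectangularity of $\pi$, and the conditions $b(\pi)=|\rho|$, $l(\pi)=|\sigma|$. The paper gives no more formula-level detail than that, so on the combinatorics you are at least as complete.

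The gap is in the complexity bookkeeping, and it comes from one slip: in this paper $\Delta_0=\Sigma_0=\Pi_0$ denotes the \emph{open} (quantifier-free) formulas, and there is no convention that bounded quantifiers are absorbed. Your formula for (2), ``every total $y\le\pi$ satisfies $y\le\rho$'', carries an honest universal quantifier and is $\Pi_1$, not $\Delta_0$. Indeed one can check that no boolean combination of the atomic formulas in $\rho,\pi,[1]{+}[1]$ separates $([3],\,2[3])$ from $([3],\,[4]{+}[1])$, so the relation in (2) is genuinely not quantifier-free definable and the lemma's $\Delta_0$ claim is itself suspect; the paper's one-line proof does not address this. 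For (1), routing through covers does not reach $\Pi_1$ either: the relation $\rho\prec\tau$ is $\Pi_1$ and sits in the antecedent, while ``not trivial'' is $\Sigma_1$ in the consequent, so your maximality clause has prenex form $\forall\tau\,\Sigma_1$, which is $\Pi_2$. Boundedness of the interval $[\rho,\pi]$ does not lower this in the present setting.

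None of this endangers (3). The relation in (1) also has the $\Sigma_2$ description ``$\sigma$ trivial, $\sigma\le\pi$, and $\exists\tau\,(\sigma\prec\tau\wedge\tau\text{ trivial}\wedge\tau\nleq\pi)$'', so (1) is $\Delta_2$; combined with the $\Delta_2$ predicate for rectangularity and the $\Pi_1$ version of (2), the conjunction defining (3) is both $\Sigma_2$ and $\Pi_2$, as required.
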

\begin{proof}
(1) We see that $l(\pi) = m$ iff $m[1] \leq \pi$ but $(m+1)[1] \nleq \pi$.

(2) That every block of $\pi$ has at most $n$ elements is given by the condition $[m] \nleq \pi$ for $[m] > [n]$.

(3) We see that $(\rho,\sigma,\pi)$ is in this relation if and only if $\rho$ is total with $b(\pi)=|\rho|$, $\sigma$ is trivial with $l(\pi)=|\sigma|$, and $\pi$ is rectangular.

It is immediate that $\pi \approx n[m]$ satisfies the condition. To see that they are sufficient, we must have $\pi \approx r[t]$ by rectangularity, $l(\pi)=|\sigma|$ implies $r=|\sigma|$, and $t=|\rho|$ since $\rho$ is the largest total partition below $\pi$.
\end{proof}

\begin{proposition}\label{edistinct}
$\left\{ \pi : \hbox{ all parts of } \pi \hbox{ are distinct } \right\}$ is $\Pi_2$-definable in $\textbf{Y}^{\ast}$.
\end{proposition}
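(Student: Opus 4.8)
The plan is to reduce ``all parts of $\pi$ are distinct'' to a uniform family of statements about how many parts of $\pi$ have size at least a given $m$, and then to express those counting statements through the rectangular partitions made available by Lemma~\ref{lem:unif}(3). For $m\ge 1$ write $c_m(\pi)$ for the number of parts of $\pi$ of size at least $m$; since $c_m(\pi)-c_{m+1}(\pi)$ is exactly the number of parts of $\pi$ equal to $m$, the partition $\pi$ has all parts distinct if and only if $c_m(\pi)\le c_{m+1}(\pi)+1$ for every $m\ge 1$. Reading this as ``$c_m(\pi)\ge a\Rightarrow c_{m+1}(\pi)\ge a-1$'' and recalling from \eqref{eqn:1} that ``$a[m]\le\pi$'' is literally the assertion that $\pi$ has at least $a$ parts of size at least $m$, the condition becomes: for all $m\ge 1$ and all $a\ge 2$, if $a[m]\le\pi$ then $(a-1)[m+1]\le\pi$. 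One direction is immediate, since among the first $a$ parts of a partition with distinct parts the $(a-1)$-st already exceeds the $a$-th; for the converse, a repeated part $\pi_j=\pi_{j+1}=m$ produces the failing instance $a=j+1$. What remains is to realize this reformulation by a $\Pi_2$-formula of $\textbf{Y}^{\ast}$.

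For that I would quantify over the four indices by way of total and trivial partitions. Let $\rho$ stand for $[m]$ and $\rho'$ for $[m+1]$; these are total by Lemma~\ref{ecomptriv}, and $\rho'$ is the unique total partition covering $\rho$. Let $\sigma$ stand for $a[1]$ and $\sigma'$ for $(a-1)[1]$; these are trivial by Lemma~\ref{ecomptriv}, and $\sigma'$ is forced to be the unique lower cover of the rectangular partition $\sigma$ by Lemma~\ref{lem:rect}. By Lemma~\ref{lem:unif}(3) there is a $\Delta_2$-formula $T(\rho,\sigma,\theta)$ asserting that $\rho$ is total, $\sigma$ is trivial, and $\theta$ is the rectangle consisting of $l(\sigma)$ parts each of size $|\rho|$; set $P(\rho,\sigma,\pi):=\exists\theta\,(T(\rho,\sigma,\theta)\wedge\theta\le\pi)$, which expresses $a[m]\le\pi$ when $\rho=[m]$ and $\sigma=a[1]$, and which is false when no such rectangle exists (in particular when $\rho=\emptyset$). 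Since $T$ is $\Delta_2$ and the rectangle is unique when it exists, $P$ admits both a $\Sigma_2$-presentation (the displayed one) and a $\Pi_2$-presentation, namely $\forall\theta\,(T(\rho,\sigma,\theta)\to\theta\le\pi)$, which agrees with the former whenever the rectangle exists. The proposition then reduces to the sentence
\[
\forall\rho\,\forall\rho'\,\forall\sigma\,\forall\sigma'\ \bigl[\, G(\rho,\rho',\sigma,\sigma')\ \longrightarrow\ \bigl(P(\rho,\sigma,\pi)\to P(\rho',\sigma',\pi)\bigr)\,\bigr],
\]
where the guard $G(\rho,\rho',\sigma,\sigma')$ asserts that $\rho$ and $\rho'$ are total with $\rho\prec\rho'$, that $\sigma$ and $\sigma'$ are trivial with $\sigma'\prec\sigma$, and that $\sigma\ge[1]+[1]$ (this last conjunct uses the constant of $\textbf{Y}^{\ast}$ to enforce $a\ge 2$).

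The step I expect to be the real obstacle is the quantifier bookkeeping needed to see this sentence as $\Pi_2$ rather than merely $\Pi_3$: the $\Delta_2$ rectangle relation occurs positively in $P(\rho',\sigma',\pi)$ and, once the implication is unfolded, negatively in $\neg P(\rho,\sigma,\pi)$, both underneath the outermost universal quantifiers, which is exactly the configuration that naively costs a third quantifier block. Two observations should keep the complexity down. First, the guard must be arranged to be $\Pi_1$: totality is $\Delta_0$ and triviality is $\Pi_1$ by Lemma~\ref{ecomptriv}, while the two covering conjuncts are $\Pi_1$ by \eqref{eqn:2}, so one should resist strengthening $G$ by the tempting but only $\Sigma_1$ clause ``$\rho\ne\emptyset$'', and instead check that the instances with $\rho=\emptyset$ are harmless, since then the rectangle fails to exist, $P(\rho,\sigma,\pi)$ is false, and the inner implication is vacuous. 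Second, the two presentations of $P$ should be used asymmetrically: the $\Sigma_2$-form for the antecedent copy $P(\rho,\sigma,\pi)$, so that its negation is $\Pi_2$, and the $\Pi_2$-form for the consequent copy $P(\rho',\sigma',\pi)$, which is legitimate because the relevant rectangle does exist whenever $G$ holds. Then the matrix has the shape $\neg G\vee\neg P(\rho,\sigma,\pi)\vee P(\rho',\sigma',\pi)$, i.e.\ $\Sigma_1\vee\Pi_2\vee\Pi_2$; as $\Pi_2$ is closed under finite disjunction and contains $\Sigma_1$, the matrix is $\Pi_2$, and prefixing the universal block $\forall\rho\,\forall\rho'\,\forall\sigma\,\forall\sigma'$ leaves a $\Pi_2$-sentence, which is the claim.
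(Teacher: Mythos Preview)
Your proof is correct and follows essentially the same approach as the paper. Both arguments reduce ``all parts distinct'' to the arithmetic condition $c_m(\pi)-c_{m+1}(\pi)\le 1$ and then encode it via rectangular subpartitions using Lemma~\ref{lem:unif}(3); your implication ``$a[m]\le\pi\Rightarrow(a-1)[m+1]\le\pi$'' is precisely the contrapositive (after reindexing $s=a-1$, $p=m$) of the paper's ``$s[p]\le\pi$ and $s[p+1]\nleq\pi\Rightarrow(s+1)[p]\nleq\pi$''. Your treatment of the quantifier bookkeeping---using the $\Sigma_2$ form of the rectangle predicate in the antecedent and the $\Pi_2$ form in the consequent, and checking that the $\rho=\emptyset$ case is vacuous so that the guard stays $\Pi_1$---is more explicit than the paper's one-line justification, but the underlying mechanism is the same.
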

\begin{proof}
Let $b(\pi)=n$ and $l(\pi)=t$. Then all the blocks of $\pi$ are distinct if and only if $\forall s < t$ and for all $[n_{s}] \leq [n]$ such that $s[n_{s}] \leq \pi$ and $s[n_{s}+1] \nleq \pi$, then we must have $(s+1)[n_{s}] \nleq \pi$.

First, suppose all parts of $\pi$ are distinct and order them as $[n_{1}] > [n_{2}] > \cdots > [n_{t}]$ where $n_1 = n$. For $s \leq t$ and a rectangular partition $s[p] \leq \pi$ such that $s[p+1] \nleq \pi$, then we must have $p=n_s$. Since $n_s > n_{s+1}$, it is the case that $(s+1)[p] \nleq \pi$.

Conversely, suppose $\pi$ satisfies the conditions and consider the non-canonical representation of $\pi$ with $[n_{1}] \geq [n_{2}] \geq \cdots \geq [n_{t}]$ where $n_1 = n$. For a contradiction, suppose there is an interval in the index with repeated parts $n_{k}=n_{k+1}= \cdots = n_{k+j}$ with $j \geq 1$. Then we have the rectangular partition $k[n_{k}] \leq \pi$ with $k[n_{k}+1] \nleq \pi$, but $(k+1)[n_{k}] \leq \pi$ because $[n_{k}]=[n_{k+1}]$ - a contradiction of the conditions. It must be that all the parts of $\pi$ are distinct.

The defining condition above is in the form of an implication which is in the range of a universal quantifier. Since rectangular partitions are $\Delta_2$-definable, we can use a $\Sigma_2$-formula for them in the precedent of the implication so that the whole formula is logically equivalent to a $\Pi_2$-formula.
\end{proof}

We can now specify the existence of a particular part.

\begin{proposition}\label{eblockus}
$\left\{ (\rho,\pi): \rho = [n] \hbox{ and } [n] \hbox{ is a part of  } \pi \right\}$ is $\Pi_2$-definable in $\textbf{Y}^{\ast}$.
\end{proposition}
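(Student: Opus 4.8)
The plan is to express "$[n]$ is a part of $\pi$" in terms of data already shown to be $\Pi_2$-definable: the length function, the bound function $b$, and the combinatorial condition that detects the multiplicity of a rectangular "layer" inside $\pi$. The key observation is that $[n]$ is a part of $\pi$ precisely when, writing $t = l(\pi)$, there is some index $s$ with $1 \le s \le t$ such that the maximal rectangular partition of the form $s[p]$ sitting below $\pi$ (meaning $s[p] \le \pi$ but $s[p+1] \nleq \pi$) has $p = n$ together with $(s+1)[n] \nleq \pi$ — i.e. exactly $s$ parts of $\pi$ are $\ge n$ and fewer than $s+1$ are. Equivalently, one sorts the parts of $\pi$ as $[n_1] \ge [n_2] \ge \cdots \ge [n_t]$ (non-canonical), and $[n]$ is a part iff $n = n_s$ for some $s$; the rectangular-layer test $s[n_s] \le \pi \wedge s[n_s+1]\nleq \pi$ recovers $n_s$ from $s$. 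So the first step is to state and verify this equivalence carefully, by the same bookkeeping already used in the proof of Proposition~\ref{edistinct}.

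The second step is to write the defining formula. Using Lemma~\ref{lem:unif}(1) we have a $\Pi_1$-formula $\mathrm{Len}(\rho',\pi)$ asserting $\rho' = l(\pi)[1]$; using Lemma~\ref{lem:unif}(3) (or its components) we can, given a trivial partition $\sigma = s[1]$ and a total partition $[p]$, $\Delta_2$-define the rectangular partition $s[p]$, hence assert $s[p] \le \pi$ and $(s+1)[p] \nleq \pi$ via $\Delta_2$ conditions. The formula for "$\rho = [n]$ is a part of $\pi$" then reads: $\rho$ is total, and there exists a trivial partition $\sigma = s[1]$ with $\sigma \le l(\pi)[1]$ (so $1\le s\le l(\pi)$), such that $s[\,|\rho|\,] \le \pi$, $\,s[\,|\rho|\,+1] \nleq \pi$, and $(s+1)[\,|\rho|\,] \nleq \pi$, where the rectangular partitions $s[\,|\rho|\,]$, $s[\,|\rho|\,+1]$, $(s+1)[\,|\rho|\,]$ are pinned down by existentially quantified auxiliary variables subject to the relevant $\Delta_2$ conditions, and $|\rho|+1$ is reached by passing to the cover $[\,|\rho|+1\,]$ of $[\,|\rho|\,]$ among total partitions (a $\Delta_0$/$\Pi_1$ move by Lemma~\ref{ecomptriv}).

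The third step is the complexity count. Everything inside is a boolean combination of $\Pi_1$ facts (length, totality, triviality, covers) and $\Delta_2$ facts (rectangularity of the auxiliary partitions and the inequalities involving them); the one genuine alternation comes from the outer $\exists \sigma$ over the trivial partition together with the universal quantifiers buried in the $\Pi_1$/$\Pi_2$ subformulas. Since the rectangularity predicate may be taken $\Sigma_2$ when it occurs positively and $\Pi_2$ when it occurs negatively, one arranges the occurrences so that the whole formula collapses to prenex $\exists\forall$ shape, i.e. $\Pi_2$ — exactly as in the closing paragraph of Proposition~\ref{edistinct}. (Strictly, one existential layer for $\sigma$ and the auxiliary rectangles, then the universal layer; no third block survives.)

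The main obstacle I expect is not the logic of the equivalence — that is a routine variant of Proposition~\ref{edistinct} — but the syntactic bookkeeping needed to keep the final formula at the $\Pi_2$ level: one must be careful that the auxiliary rectangular partitions $s[n]$, $s[n+1]$, $(s+1)[n]$ are introduced by existential quantifiers whose matrix uses the $\Sigma_2$ form of "rectangular" in positive position, while the negated memberships $s[n+1]\nleq\pi$ and $(s+1)[n]\nleq\pi$ do not reintroduce an unwanted quantifier alternation. Handling "$|\rho|+1$" cleanly (via the total cover of $\rho$) rather than by an arithmetic successor is the small trick that makes this go through inside $\textbf{Y}^{\ast}$ without extra constants beyond $[1]+[1]$.
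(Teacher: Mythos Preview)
Your combinatorial characterization is correct and essentially the same as the paper's: writing $r$ for the number of parts of $\pi$ that are $\ge n$, one has $[n]$ a part of $\pi$ iff $r[n]\le\pi$, $(r+1)[n]\nleq\pi$, and $r[n+1]\nleq\pi$. The problem is purely in the quantifier bookkeeping. You phrase this as ``there \emph{exists} $s$ with $s[n]\le\pi$, $(s+1)[n]\nleq\pi$, $s[n+1]\nleq\pi$'', bind $s$ and the auxiliary rectangular partitions by outer existentials, and then claim the result has prenex shape $\exists\forall$, ``i.e.\ $\Pi_2$''. But $\exists\forall$ is $\Sigma_2$, not $\Pi_2$; your formula lands in $\Sigma_2$, which does not establish the proposition as stated. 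This matters downstream: in Proposition~\ref{prop:factorial} the predicate ``$[r]$ is a part of $\pi$'' sits under a further universal quantifier, so one needs the $\Pi_2$ form to keep that result at $\Pi_2$.

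The paper avoids this by using the \emph{universal} variant of the same equivalence: $\rho=[n]\le\pi$, and \emph{for all} $r$, if $r[n]\le\pi$ and $(r+1)[n]\nleq\pi$ then $r[n+1]\nleq\pi$. Since such an $r$ is unique, this is logically the same statement, but now the outer quantifier block is universal; the $\Delta_2$ rectangularity conditions appear only in the antecedent of the implication, where one may take their $\Sigma_2$ form so that negating the antecedent is $\Pi_2$, and the whole formula is $\forall(\Pi_2)=\Pi_2$. Your argument is repaired simply by swapping the leading $\exists s$ for $\forall s$ with the hypothesis ``$s[n]\le\pi\wedge(s+1)[n]\nleq\pi$'' moved into the antecedent.
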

\begin{proof}
$(\rho,\pi)$ is in this relation if and only if $\rho = [n]$, $\rho \leq \pi$, and whenever $r[n] \leq \pi$ but $(r+1)[n] \nleq \pi$, then $r[n+1] \leq \pi$.

We only argue sufficiency. Suppose $\pi$ satisfies the above condition and write the canonical representation $\pi \approx \sum_{i=1}^{t}  m_{i}[n_{i}]$. Since $\rho =[n] \leq \pi$, take $k$ largest such that $[n] \nleq [n_{k+1}]$. The largest rectangular partition with parts of size $n_k$ below $\pi$ is $\left(\sum_{i=1}^{k} m_{i}\right)[n_{k}]$. Set $r=\sum_{i=1}^{k} m_{i}$. Then $r[n] \leq r[n_k] \leq \pi$, but $(r+1)[n] \leq \pi$ because $[n_{k+1}] < [n]$. According to the $\Pi_1$-definition, we must have $r[n+1] \nleq \pi$ which can only happen if $n=n_k$. 
\end{proof}

\begin{definition}
For $n \geq 1$, a partition $\sigma \approx \sum_{i=1}^{n} [i]$ is called a \textsl{factorial} and will be denoted as $[n]!$ (In the literature, such partitions a often called triangular, but we will use a different nomenclature because of the role they play in defining multiplication).
\end{definition}

Our approach to the definability of arithmetic is to first show that factorials are definable.

\begin{proposition}\label{prop:factorial}
$\left\{(\rho,\pi): \rho \approx [n], \pi = [n]! \right\}$ is $\Pi_2$-definable in $\textbf{Y}^{\ast}$.
\end{proposition}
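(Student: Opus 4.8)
The plan is to characterize the factorial $[n]!$ purely in terms of the part-membership predicate from Proposition~\ref{eblockus} together with the order relation. The key observation is that a partition $\pi$ equals $[n]!$ if and only if it contains each part of size $1,2,\ldots,n$ exactly once and no part of size exceeding $n$. Since we already have a $\Pi_2$-definable predicate $\mathrm{Part}(\rho,\pi)$ asserting ``$\rho=[m]$ and $[m]$ is a part of $\pi$'', and a $\Delta_0$-predicate from Lemma~\ref{lem:unif}(2) bounding the size of all parts by a given total partition, the idea is to assemble these. First I would express, using the total partitions below $\rho=[n]$ (which are linearly ordered and $\Delta_0$-definable within $[1]$-conjugates by Lemma~\ref{ecomptriv}), the condition that for every total $\tau$ with $\emptyset < \tau \leq \rho$, the partition $\tau$ is a part of $\pi$; this is a universal quantification over $\tau$ with the $\Pi_2$-predicate $\mathrm{Part}$ inside, keeping us at $\Pi_2$ provided we feed the $\Sigma_2$-form of the auxiliary rectangular/part predicates into the precedent of the implication, exactly as in the proof of Proposition~\ref{edistinct}.

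Next I would add the clauses that pin down multiplicity and truncate the top: namely $b(\pi) = |\rho|$, i.e.\ $[n]\leq\pi$ but $[n+1]\nleq\pi$ (a $\Delta_0$ condition using the successor/cover relation on total partitions, which is $\Pi_1$), and that $\pi$ has all distinct parts, which is $\Pi_2$-definable by Proposition~\ref{edistinct}. The conjunction of ``every total partition of size between $1$ and $n$ is a part of $\pi$,'' ``$\pi$ has no part larger than $[n]$,'' and ``all parts of $\pi$ are distinct'' forces $\pi$ to have exactly the parts $[1],[2],\ldots,[n]$ each once, i.e.\ $\pi = [n]!$; conversely $[n]!$ visibly satisfies all three. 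Each conjunct is $\Pi_2$ (or simpler), and a conjunction of $\Pi_2$-formulas is $\Pi_2$, so the whole relation $\{(\rho,\pi): \rho=[n],\ \pi=[n]!\}$ is $\Pi_2$-definable.

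The main obstacle I anticipate is bookkeeping the quantifier complexity rather than the combinatorics: the predicate $\mathrm{Part}(\tau,\pi)$ is itself $\Pi_2$, and we need it under a universal quantifier over $\tau$, which naively would push the complexity to $\Pi_3$. The resolution is to check that $\mathrm{Part}$ also admits a $\Sigma_2$-definition — or at least that the specific uses of it here (with $\tau$ ranging over the $\Delta_0$-definable total partitions below a fixed $\rho$) can be rephrased so the inner universal quantifiers land in the precedent of an implication governed by the outer $\forall\tau$, as was done for rectangular partitions in Propositions~\ref{lem:rect} and~\ref{edistinct}. Concretely, ``$[m]$ is a part of $\pi$'' for total $[m]$ can be rewritten using rectangular partitions (which are $\Delta_2$) in a way that exposes a $\Sigma_2$ skeleton, so that prenexing the whole formula collapses it to $\Pi_2$. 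I would verify this collapse carefully, since it is the only delicate point; everything else is a direct translation of the set-theoretic description of $[n]!$ into the available definable predicates.
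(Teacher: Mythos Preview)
Your three-condition characterization---$\rho$ total with $b(\pi)=|\rho|$, every nonempty total $\tau\le\rho$ is a part of $\pi$, and all parts of $\pi$ are distinct---is exactly the paper's proof, and your verification of correctness is the same.

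Your final paragraph, however, has the quantifier bookkeeping backwards. Placing a $\Pi_2$-formula in the \emph{consequent} of an implication under an outer universal quantifier keeps the complexity at $\Pi_2$: if $\mathrm{Part}(\tau,\pi)$ has the prenex form $\forall u\,\exists v\,\psi$, then
\[
\forall\tau\bigl(\mathrm{total}(\tau)\wedge\tau\le\rho\;\longrightarrow\;\forall u\,\exists v\,\psi\bigr)
\]
prenexes to $\forall\tau\,\forall u\,\exists v\,(\cdots)$, which is still $\Pi_2$. There is no threat of $\Pi_3$ here. Your proposed ``resolution''---seeking a $\Sigma_2$-form of $\mathrm{Part}$---would in fact \emph{create} the problem you fear, since $\forall\tau(\Sigma_2)$ genuinely is $\Pi_3$. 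The $\Sigma_2$/$\Pi_2$ trade-off you invoke from Lemma~\ref{lem:rect} and Proposition~\ref{edistinct} matters only when the subformula sits in the \emph{precedent} of an implication (where it is negated); in the consequent, the $\Pi_2$ form is already what you want. So simply drop the last paragraph: the complexity collapses to $\Pi_2$ immediately, with no delicate point to check.
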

\begin{proof}
The claim is that $\pi=[n]!$ if and only if
\begin{enumerate}

	\item $\rho$ is total and $b(\pi)=b(\rho)=n$;
	
	\item for all $[r] \leq [n]$ we have that $[r]$ is a part of $\pi$;
	
	\item all the parts of $\pi$ are distinct.

\end{enumerate}

If $\pi \approx [n]!$, then it is easy to see the conditions are satisfied.

Suppose $\pi$ satisfies conditions (1) - (3). Conditions (1) and (2) imply $\pi = \sum_{i=1}^{n} m_{i}[i]$, and condition (3) implies each $m_i = 1$.
\end{proof}

We can now define the pairs of total and trivial partitions which are at the same height.

\begin{lemma}\label{epropht}
$\left\{(\rho,n): \rho \hbox{ is total}, \hbox{ $\pi$ is trivial}, \left|\rho\right| = \left|\pi\right| \right\}$ is $\Pi_3$-definable in $\textbf{Y}^{\ast}$.
\end{lemma}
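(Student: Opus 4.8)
The plan is to define the graph of the map $n \mapsto |[n]!|$ using the tools just developed, and then compose with the two previous lemmas relating lengths, largest parts, and cardinalities. Recall that $|[n]!| = 1 + 2 + \cdots + n = \binom{n+1}{2}$, so a total partition $\rho = [n]$ and a trivial partition $\pi = m[1]$ have the same cardinality precisely when $m = \binom{n+1}{2}$. The key observation is that $\binom{n+1}{2}$ is exactly the \emph{length} of the factorial partition $[n]!$: since $[n]! \approx \sum_{i=1}^{n}[i]$ has $n$ parts in its canonical representation but its \emph{cardinality} (= total number of boxes) is $1 + \cdots + n$, and the number of boxes equals the length of the trivial partition with that many boxes. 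So the strategy is: first produce $\sigma = [n]!$ from $\rho = [n]$ via Proposition~\ref{prop:factorial}; then assert that $\pi$ is trivial with $l(\sigma) = |\pi|$ \emph{counted the right way} — but actually we want $|\sigma| = |\pi|$, which is the same as saying the trivial partition $\pi$ sits at the same height as $\sigma$.

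Here is the cleaner route. I would write: $(\rho,\pi)$ is in the relation if and only if there exists $\sigma$ such that $\rho$ is total, $\sigma = [|\rho|]!$ (using Proposition~\ref{prop:factorial} with $\rho$ as the first coordinate), $\pi$ is trivial (Lemma~\ref{ecomptriv}), and $\pi$ is at the same height as $\sigma$. The only piece not yet in hand is ``$\pi$ is at the same height as $\sigma$'' for a \emph{trivial} $\pi$ and an \emph{arbitrary} $\sigma$, i.e. $l(\pi) = |\sigma|$. But Lemma~\ref{lem:unif}(1) gives exactly $\{(\pi', \sigma) : \pi' = m[1], l(\sigma) = m\}$ as $\Pi_1$-definable, and here $\pi' = \pi$ is the trivial partition whose length is its own cardinality, so $l(\pi) = l(\pi)$... wait — I need $|\pi| = |\sigma|$, and for a trivial partition $|\pi| = l(\pi)$, while Lemma~\ref{lem:unif}(1) with the pair $(\pi, \sigma)$ asserts $l(\sigma) = |\pi|$. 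Combined with the fact that $\sigma = [n]!$ has $l(\sigma) = n$ parts but $|\sigma| = \binom{n+1}{2}$, that would give the \emph{wrong} count. So instead I should not use $[n]!$ at all for the length, but rather: $\pi$ trivial and $\pi$, $\sigma$ at the same \emph{height}, which is the genuinely new ingredient.

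Therefore the real content is: \emph{same height is definable}. Since $\textbf{Y}$ is graded by cardinality and the covering relation is $\Pi_1$-definable (Eqn~\ref{eqn:2}), I would define ``$x$ and $y$ have the same height'' by saying there is a chain argument — but chains of unbounded length are not first-order expressible directly. The trick available here is that for the \emph{specific} pair (total $\rho$, trivial $\pi$) we can compute: $|[n]!| = \binom{n+1}{2}$, and $\binom{n+1}{2}$ is the length of... no. Let me reconsider: the honest approach is that $|\rho| = |\pi|$ with $\rho = [n]$ total means $|\pi| = n$, i.e. $\pi = n[1]$, which by Lemma~\ref{lem:unif}(1) is just the relation $\{(\pi,\rho): \pi = m[1], l(\rho) = m\}$ — but $l([n]) = 1$, not $n$. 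So same \emph{cardinality} is what we want, and for a total partition cardinality equals the size of its unique part $b(\rho)$, while for a trivial partition cardinality equals its length. Thus I would phrase the final condition as: there exists a factorial $\sigma$ with $\sigma = [|\rho|]!$ and $\pi = l(\sigma)[1]$ — no. The factorial is a red herring for \emph{this} lemma and belongs to the multiplication construction promised later; for \emph{this} lemma I simply want to equate heights of a total and a trivial partition.

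Given the placement right after Proposition~\ref{prop:factorial}, I now believe the intended reading is: $(\rho, \pi)$ qualifies iff $\rho$ is total, $\pi$ is trivial, and $\rho$ and $\pi$ \emph{lie at the same height in $\textbf{Y}$}, and the factorial machinery is what makes ``same height'' expressible here. Concretely: $|\rho| = |\pi| = n$ iff there is some $\sigma$ with $\sigma = [n]!$ built from $\rho$, and then $\pi$ is the trivial partition at the same height as the total partition $[n]$ — which one checks is equivalent to $\pi = b(\rho)[1]$, and $b(\rho) = |\rho|$ for total $\rho$, and ``$\pi = b(\rho)[1]$'' unwinds via Lemma~\ref{lem:unif}(1),(2). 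So my proof would be: (i) state the biconditional ``$\rho$ total, $\pi$ trivial, and for every total $\tau$ and trivial $\upsilon$ with $\tau \leq \rho$ one has $\upsilon \leq \pi$ iff $\tau \leq \rho$'' — i.e. match up the total chain below $\rho$ with the trivial chain below $\pi$ box-by-box; (ii) verify this forces $|\rho| = |\pi|$ since the total partitions below $[n]$ are exactly $[0], [1], \ldots, [n]$ (that's $n+1$ of them) and the trivial partitions below $m[1]$ are exactly $0[1], \ldots, m[1]$ (that's $m+1$ of them), and a height-preserving bijection between these chains forces $m = n$; (iii) count quantifiers: ``$\tau$ total'' is $\Delta_0$ (Lemma~\ref{ecomptriv}), ``$\pi$ trivial'' is $\Pi_1$, the biconditional sits under a $\forall\tau\forall\upsilon$, and the $\Pi_1$ trivialness inside the matching clause appears on the correct side of the implications, so after prenexing the whole formula is $\Pi_3$. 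The main obstacle is precisely bookkeeping the quantifier alternation so that the nested $\Pi_1$-definitions of ``trivial'' land on polarities that keep the total complexity at $\Pi_3$ rather than $\Pi_4$ — this is the same implication-management issue discussed around Eqn~\ref{eqn:4}, and I would handle it by using the $\Sigma_2$ form of any subformula that needs to appear positively inside a universally quantified implication.
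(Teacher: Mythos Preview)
Your proposal is not a proof: it is a sequence of approaches, and the one you finally commit to is broken, while the correct one you explicitly reject.

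You actually had the paper's argument in hand and then talked yourself out of it. You write: ``Lemma~\ref{lem:unif}(1) with the pair $(\pi,\sigma)$ asserts $l(\sigma)=|\pi|$. Combined with the fact that $\sigma=[n]!$ has $l(\sigma)=n$ parts \ldots\ that would give the \emph{wrong} count.'' It gives the \emph{right} count. If $\rho=[n]$ and $\pi=m[1]$, then $|\rho|=n$ and $|\pi|=m$; the factorial $\sigma=[n]!$ has exactly $n$ parts, so $l(\sigma)=n$, and Lemma~\ref{lem:unif}(1) applied to $(\pi,\sigma)$ says precisely $n=m$. That is the whole lemma. The paper phrases it as: $\rho$ total, $\pi$ trivial, and for all $\sigma$, if $(\rho,\sigma)$ satisfies the factorial relation of Proposition~\ref{prop:factorial} then $(\pi,\sigma)$ satisfies Lemma~\ref{lem:unif}(1). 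The factorial relation is $\Pi_2$; placing it in the antecedent of a universally quantified implication yields a $\Pi_3$-formula. The factorial is not a red herring --- it is the entire mechanism by which ``same height'' becomes expressible here.

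The approach you settle on instead is not well-formed. Your condition ``for every total $\tau$ and trivial $\upsilon$ with $\tau\leq\rho$ one has $\upsilon\leq\pi$ iff $\tau\leq\rho$'' has the hypothesis $\tau\leq\rho$ reappearing as one side of the biconditional, so under that hypothesis it collapses to ``every trivial $\upsilon$ satisfies $\upsilon\leq\pi$,'' which is false for any $\pi$. What you seem to want in (ii) is a height-preserving bijection between the chain of total partitions below $\rho$ and the chain of trivial partitions below $\pi$, but first-order logic cannot quantify over bijections, and any attempt to match $\tau$ with $\upsilon$ pointwise requires the relation $|\tau|=|\upsilon|$ --- which is exactly what you are trying to define.
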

\begin{proof}
$(\rho,\pi)$ is in this relation if and only if $\rho = [r]$, $\pi = m[1]$, and $l(\sigma) = m$ where $\sigma = [r]!$. The last requirement is an implication which asserts a factorial in the precedent. The definition is then logically equivalent to a $\Pi_3$-formula using Proposition~\ref{prop:factorial}.
\end{proof}

With factorials, we don't have to start counting the parts just from $[1]$ - we can now perform addition.

\begin{proposition}\label{eaddition}
$\left\{(\rho,\sigma,\pi): \rho,\sigma,\pi \hbox{ are total and } \left|\rho\right| + \left|\sigma\right|  = \left|\pi\right| \right\}$ is $\Pi_3$-definable in $\textbf{Y}^{\ast}$.
\end{proposition}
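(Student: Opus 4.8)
The plan is to encode the sum $|\rho| + |\sigma| = |\pi|$ by concatenating the two factorials $[\,|\rho|\,]!$ and $[\,|\sigma|\,]!$ inside a single partition whose length records the total, and then reading that total back off as a trivial partition. Concretely, I would work with the partition $\tau = [\,|\rho|\,]! + [\,|\sigma|\,]!$, obtained by taking the multiset union of the parts of the two factorials. Its length is $l(\tau) = |\rho| + |\sigma|$, since $[n]!$ has exactly $n$ parts (namely $[1],[2],\dots,[n]$). So $(\rho,\sigma,\pi)$ lies in the target relation if and only if $\rho,\sigma,\pi$ are total and there exist factorials $\alpha = [\,|\rho|\,]!$, $\beta = [\,|\sigma|\,]!$ and a trivial partition $\mu = l(\tau)[1]$ with $\tau = \alpha \pplus \beta$ such that $l(\tau) = |\pi|$. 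Using Lemma~\ref{epropht} we convert the condition $l(\tau) = |\pi|$ (with $\pi$ total) into the assertion that $\tau$ and $\pi$ sit at compatible heights via the appropriate trivial partition.

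The one genuinely new ingredient I need is that the ternary relation $\{(\alpha,\beta,\tau): \tau = \alpha \pplus \beta\}$ — the multiset union of parts — is definable with low complexity, at least in the special case where $\alpha$ and $\beta$ are factorials. Here I would exploit the very rigid shape of factorials: $\alpha = [a]!$ has exactly one part of each size $1,2,\dots,a$, and $\beta = [b]!$ one part of each size $1,\dots,b$. Hence if, say, $a \ge b$, the multiset union $\alpha \pplus \beta$ has exactly two parts of each size $\le b$ and exactly one part of each size with $b < {}\cdot{} \le a$. So $\tau = [a]! \pplus [b]!$ is pinned down by the conditions: $b(\tau) = a$; for every $[r] \le [b]$ the part $[r]$ occurs exactly twice in $\tau$; for every $[r]$ with $[b] < [r] \le [a]$ the part $[r]$ occurs exactly once in $\tau$; and $\tau$ has no part $[r]$ of multiplicity $\ge 3$. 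Each of these is expressible via Proposition~\ref{eblockus} (``$[n]$ is a part of $\pi$'', $\Pi_2$) together with the rectangular-partition machinery of Lemma~\ref{lem:rect} to count multiplicities — e.g. ``$[r]$ occurs exactly $c$ times in $\tau$'' is captured by ``$c[r] \le \tau$ and $(c+1)[r] \nleq \tau$'', and the quantification over all $r \le a$ is bounded by $b(\tau)$, which keeps everything inside $\Pi_2$ relative to the factorial predicate. Since the factorial predicate itself is $\Pi_2$ (Proposition~\ref{prop:factorial}) and appears here in the precedent of universally quantified implications, the composite stays $\Pi_3$.

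Putting the pieces together, the defining formula asserts: $\rho,\sigma,\pi$ are total (a $\Delta_0$ condition by Lemma~\ref{ecomptriv}); there exist $\alpha,\beta,\tau,\mu$ with $\alpha$ the factorial of $\rho$, $\beta$ the factorial of $\sigma$ (both $\Pi_2$, used positively under an existential quantifier, hence harmless), $\tau = \alpha \pplus \beta$ realized by the multiplicity conditions above, $\mu$ trivial with $l(\tau) = |\mu|$, and finally $|\mu| = |\pi|$ via the total/trivial same-height relation of Lemma~\ref{epropht}. The height-matching steps each contribute a $\Pi_3$ block, and the various $\Pi_2$ predicates sit in precedents of implications, so the whole sentence collapses to $\Pi_3$ after prenexing. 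The main obstacle I anticipate is bookkeeping the quantifier complexity of the ``multiset union of factorials'' predicate: one must check carefully that the universal quantification over part-sizes, combined with the implicit use of the $\Pi_2$ factorial and multiplicity predicates, does not push the formula past $\Pi_3$ — this is where the fact that the quantifier over $r$ is bounded (by $b(\tau)$) and that the heavy predicates occur only in antecedents is doing the real work. An alternative to defining $\alpha \pplus \beta$ directly would be to characterize $\tau$ intrinsically as ``the partition with distinct parts such that the largest is $[a]$ and $\tau$ together with a second copy of $[\min(a,b)]!$ has all parts of multiplicity two up to $[b]$'' — but I expect the straightforward multiplicity-counting description above to be cleanest.
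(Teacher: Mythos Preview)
Your strategy is genuinely different from the paper's. The paper does not build the multiset union $[a]!\pplus[b]!$ at all; instead it works with a single partition $\beta^\ast$ having \emph{distinct} parts, namely the interval $[|\rho|+1],\ldots,[|\pi|]$, and reads off $l(\beta^\ast)=|\pi|-|\rho|$ to compare against $|\sigma|$. The point of using a distinct-parts witness is that Proposition~\ref{edistinct} is $\Pi_2$, so the entire description of $\beta^\ast$ sits at level $\Pi_2$ and the outer universal keeps the formula $\Pi_3$.

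Your route has a concrete error. The claim that ``$[r]$ occurs exactly $c$ times in $\tau$'' is captured by ``$c[r]\le\tau$ and $(c+1)[r]\nleq\tau$'' is false in Young's lattice: that condition says there are exactly $c$ parts of $\tau$ of size \emph{at least} $r$, not that $[r]$ has multiplicity $c$. For instance in $\tau=[3]!=(3,2,1)$ the part $[1]$ has multiplicity one, yet $3[1]\le\tau$ and $4[1]\nleq\tau$. So your description of $\tau=[a]!\pplus[b]!$ via ``$[r]$ occurs twice for $r\le b$, once for $b<r\le a$'' does not translate the way you wrote it.

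This is not merely cosmetic. The tool that \emph{does} give exact multiplicities is Proposition~\ref{prop:freq}, and that predicate is $\Pi_3$. If you use it to pin down $\tau$, the characterization of $\tau$ becomes $\Pi_3$; placing a $\Pi_3$ hypothesis in the antecedent of a universally quantified implication produces $\forall(\Sigma_3\vee\cdots)$, which is $\Pi_4$, not $\Pi_3$. In other words, the moment your witness partition carries repeated parts whose multiplicities must be controlled, you pay an extra quantifier level. The paper's choice of a distinct-parts $\beta^\ast$ is exactly what sidesteps this: Proposition~\ref{edistinct} is one level cheaper than Proposition~\ref{prop:freq}, and that single level is the difference between $\Pi_3$ and $\Pi_4$. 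If you want to rescue your argument, you would need a $\Pi_2$ (or $\Delta_2$) characterization of $[a]!\pplus[b]!$ that does not pass through exact multiplicities; absent that, the approach as written does not deliver the stated bound.
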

\begin{proof}
$(\rho,\sigma,\pi)$ is in this relation if and only if 
\begin{enumerate}

	\item $\rho,\sigma,\pi$ are total, 
	
	\item $\rho,\sigma < \pi$, and
	
	\item for all $\beta^{\ast}$, for all $\alpha$, if all parts of $\beta^{\ast}$ are distinct, $\alpha$ is total, and $\alpha$ is a part of $\beta{\ast}$ if and only if $\rho < \alpha \leq \pi$, then it must be that $l(\beta^{\ast}) \geq|\sigma|$. 
	
\end{enumerate} 

The condition that all parts of $\beta^{\ast}$ are distinct is $\Pi_2$ from Proposition~\ref{edistinct}. The bi-implication that $\alpha$ is a part of $\beta{\ast}$ if and only if $\rho < \alpha \leq \pi$ is logically equivalent to a $\Sigma_2$-formula using Proposition~\ref{eblockus}. Since both conditions are in the precedent of the implication in condition (3), the whole condition is logically equivalent to a $\Pi_3$-formula. 
\end{proof}

It follows from Lemma~\ref{epropht} and Proposition~\ref{eaddition} that we can also interpret addition by considering the corresponding triplets of trivial partitions.

We may refer to a partition of the form $m[n]$ as $n$-rectangular to denote the fact that all the parts have size $n$. We will also say $m[n]$ has \textit{frequency} $m$ to refer to the part $[n]$ appearing $m$ times. We saw in Lemma~\ref{lem:unif} that the set of $n$-rectangular partitions is definable; moreover, it is easy to see that the they are linearly ordered. The next result allows us to pick out the rectangular partitions which appear in a canonical representation.

\begin{proposition}\label{prop:freq}
The relation 
\[
\left\{(\rho,\sigma,\pi): \sigma = [n]  \hbox{ and $\rho$ is a part of } \pi \hbox{ which appears exactly $n$ times } \right\}
\]
is $\Pi_3$-definable in $\textbf{Y}^{\ast}$.
\end{proposition}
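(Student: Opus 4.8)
The plan is to express the condition ``$\rho$ is a part of $\pi$ appearing exactly $n$ times'' by combining Proposition~\ref{eblockus} (being a part) with a way of reading off the frequency of that part, and then to encode the frequency using the length function, which is $\Pi_1$-definable via Lemma~\ref{lem:unif}(1). Concretely, suppose $\rho = [k]$ is a part of $\pi$. Then the largest $k$-rectangular partition below $\pi$ is $m[k]$, where $m$ is exactly the multiplicity with which $[k]$ occurs in $\pi$; this maximal $m[k]$ is definable because the $k$-rectangular partitions below $\pi$ are linearly ordered (as remarked before the statement). Having isolated $m[k]$, its length equals $m$, and so $m = n$ can be tested by asking that $n[1] \leq m[k]$ while $(n+1)[1] \nleq m[k]$ — that is, by invoking Lemma~\ref{lem:unif}(1) with $m[k]$ in place of $\pi$.

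The key steps, in order, are: (i) assert $\sigma = [n]$ is total (this is $\Delta_0$ by Lemma~\ref{ecomptriv}) and $\rho = [k]$ is total; (ii) assert via Proposition~\ref{eblockus} that $\rho$ is a part of $\pi$ — this is $\Pi_2$; (iii) quantify existentially over a witness $\tau$ and assert that $\tau$ is $k$-rectangular with $\tau \leq \pi$ and $\tau$ is maximal with this property, i.e. $\forall \tau'\,(\tau'\text{ is }k\text{-rectangular} \wedge \tau' \leq \pi \longrightarrow \tau' \leq \tau)$; here ``$\tau$ is $k$-rectangular'' means $\tau$ is rectangular (which is $\Delta_2$ by Lemma~\ref{lem:rect}) and $b(\tau) = k$, i.e. $\rho \leq \tau$ but no total partition strictly above $\rho$ lies below $\tau$, which is low-complexity; (iv) finally assert $l(\tau) = n$, i.e. $\sigma\text{-}\mathrm{trivial\ image} \leq \tau$ but its successor is not, using Lemma~\ref{lem:unif}(1) — more precisely, $\exists \mu\,(\mu = n[1] \wedge \mu \leq \tau \wedge$ ``$(n{+}1)[1] \nleq \tau$''$)$, where $n[1]$ is pinned down from $\sigma=[n]$ by Lemma~\ref{lem:unif}(3) or directly by Lemma~\ref{epropht}.

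For the complexity bookkeeping: the maximality clause in step (iii) is a universal quantifier over $\tau'$ whose matrix contains ``$\tau'$ is rectangular'' in the antecedent, which we take in its $\Sigma_2$ form so the implication is equivalent to a $\Pi_3$ statement; everything else — the $\Pi_2$ clause from Proposition~\ref{eblockus}, the $\Delta_2$ rectangularity and $k$-rectangularity of the witness $\tau$, the $\Pi_1$ length condition on $\tau$ — sits under at most one block of existential quantifiers (for $\tau$ and $\mu$) followed by the $\Pi_3$ maximality clause, so the whole formula is logically equivalent to a $\Pi_3$-formula. Sufficiency is the routine direction: one writes $\pi \approx \sum_i m_i[n_i]$ in canonical form, observes that the maximal $k$-rectangular partition below $\pi$ is $m_j[k]$ when $k = n_j$ (and is $(\sum_{i \leq j} m_i)[n_{j+1}]$ otherwise — but that case is excluded since we forced $\rho$ to be a genuine part via Proposition~\ref{eblockus}), hence its length is $m_j$, and the length condition forces $m_j = n$.

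The main obstacle is the complexity accounting rather than the combinatorics: one must make sure the nested ``maximal $k$-rectangular partition below $\pi$'' sub-definition does not push the formula past $\Pi_3$. This is why it is essential to quantify $\tau$ existentially at the outermost level and to use the $\Sigma_2$ presentation of rectangularity from Lemma~\ref{lem:rect} inside the antecedent of the maximality implication, exactly as was done for addition in Proposition~\ref{eaddition}; one should also double-check that the ``$b(\tau)=k$'' and ``$l(\tau)=n$'' clauses, which refer to $\rho$ and $\sigma$, genuinely stay at the $\Pi_1$/$\Delta_0$ level so that they can be absorbed harmlessly.
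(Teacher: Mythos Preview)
There is a genuine combinatorial error in your step (iii)/(iv). You claim that when $\rho=[k]$ is a part of $\pi$, the maximal $k$-rectangular partition below $\pi$ has length equal to the multiplicity of $[k]$ in $\pi$. This is false: if $\pi=\sum_{i=1}^{s} m_i[n_i]$ in canonical form and $k=n_j$, then the maximal $k$-rectangular partition below $\pi$ is $\bigl(\sum_{i\le j} m_i\bigr)[k]$, not $m_j[k]$. Concretely, take $\pi=2[6]+[5]+3[4]$; the maximal $4$-rectangular partition below $\pi$ is $6[4]$, because every one of the six parts of $\pi$ is $\ge 4$, so your formula would return frequency $6$ rather than $3$. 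Your parenthetical acknowledges the cumulative form $\sum_{i\le j} m_i$ for non-parts and then dismisses it as ``excluded''; the point is that exactly the same cumulative count applies when $k$ \emph{is} a part.

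This is precisely why the paper's argument is more elaborate: it does not read the frequency directly off a single maximal rectangular partition but instead performs a subtraction. Writing $M_j=\sum_{i\le j} m_i$, the multiplicity is recovered as $m_j=M_j-M_{j-1}$, and the paper obtains $M_j$ and $M_{j-1}$ as the lengths of the maximal $n_j$-rectangular and maximal $\beta^\ast$-rectangular partitions below $\pi$, where $\beta^\ast$ ranges over parts of $\pi$ strictly above $[k]$ (with a separate clause for the case $b(\pi)=k$, where there is no such $\beta^\ast$). Your proposal can be repaired by inserting this subtraction step, but as written it defines the wrong relation: ``the number of parts of $\pi$ of size at least $|\rho|$ equals $n$'' rather than ``$\rho$ occurs exactly $n$ times''.
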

\begin{proof}
Let $\rho = [r]$ and $\sigma = [n]$. We have that $[r]$ is a block of $\pi$ which appears exactly $n$ times if and only if $\pi \approx n[r]$, or
\begin{enumerate}

	\item $[r]$ is a part of $\pi$;
	
	\item If $b(\pi) = r$, then $n[r]$ is the maximal $r$-rectangular partition below $\pi$;
	
	\item If $b(\pi) \neq r$ and for all $\beta^{\ast}$ such that
	
		\begin{enumerate}
		
			\item $\beta^{\ast}$ is a part of $\pi$ with $\beta^{\ast} > [r]$, and
			
			\item where $m[r]$ is the maximal $r$-rectangular partition below $\pi$, and
			
			\item $t\beta^{\ast}$ is the maximal $|\beta^{\ast}|$-rectangular partition below $\pi$
		
		\end{enumerate}
		then $n \leq m-t$.

\end{enumerate}
If we examine the canonical representation of $\pi = \sum_{i=1}^{s} m_{i}[n_{i}]$, then for any part $[n_{r}]$, we see that $\left(\sum_{i=1}^{r} m_{i}\right)[n_{r}]$ is the largest $n_{r}$-rectangular partition below $\pi$, and so the correctness of the above characterization follows since $m_r = \sum_{i=1}^{r} m_{i} - \sum_{i=1}^{r-1} m_{i}$. 

Condition (1) is a $\Pi_2$-definition by Proposition~\ref{eblockus}. In condition (2), we are asserting a maximal r-rectangular partition below $\pi$. If we use a $\Sigma_2$-formula for the rectangular property, then the discussion following Eqn.~\ref{eqn:4} concludes in this case that it is logically equivalent to a $\Pi_2$-formula. For the same reason, (3b) and (3c) are both $\Pi_2$-formulas and (3a) is also a $\Pi_2$-formula by Proposition~\ref{eblockus}; altogether, the implication (3) is logically equivalent to a $\Pi_3$-formula.  
\end{proof}

\begin{proposition}\label{prop:compare}
$\left\{ (\rho, \pi): \rho \hbox{ is total and } \left|\pi\right| \geq \left|\rho\right| \right\}$ is $\Pi_3$-definable in $\textbf{Y}^{\ast}$.
\end{proposition}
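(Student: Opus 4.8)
The goal is to express, in a $\Pi_3$-way, that a total partition $\rho = [r]$ satisfies $|\pi| \geq r$ for an arbitrary partition $\pi$. The natural idea is to compare the cardinality of $\pi$ against the cardinality of the \emph{total} partition $[r]$, but since $\pi$ is arbitrary we cannot simply invoke the graded structure directly through a single part. Instead, the plan is to build an auxiliary rectangular partition of the same cardinality as $\pi$ (or, more precisely, one whose cardinality we can relate to both $|\pi|$ and $r$) and then reduce the comparison to something already handled, most likely the addition relation from Proposition~\ref{eaddition} together with the frequency machinery of Proposition~\ref{prop:freq}.

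The cleanest route I would pursue is the following. Observe that $|\pi| \geq r$ fails exactly when $\pi$ fits strictly below the staircase in a quantifiable sense; concretely, write the canonical representation $\pi \approx \sum_i m_i[n_i]$, so $|\pi| = \sum_i m_i n_i$. I would assert the existence of a witness $\gamma$ (a total partition) together with the claim that $|\gamma| = |\pi|$; the pair $(\gamma, \pi)$ lies in a "same height as a total partition" relation, and then $|\pi| \geq |\rho|$ becomes $|\gamma| \geq |\rho|$, i.e., $[\rho] \leq [\gamma]$ in Young's lattice — a $\Delta_0$ condition once $\gamma$ is produced. So the real content is: the relation $\{(\gamma,\pi): \gamma \text{ total}, |\gamma| = |\pi|\}$ is $\Pi_3$-definable (or low enough), and existentially quantifying $\gamma$ over it keeps us at $\Pi_3$. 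To define $|\gamma| = |\pi|$ for $\gamma$ total, I would iterate the part-extraction: for each part $[n_i]$ of $\pi$ occurring $m_i$ times (Proposition~\ref{prop:freq} gives access to the frequency $m_i$ as a total partition $[m_i]$), the contribution to the cardinality is $m_i \cdot n_i$; summing these is repeated addition, and addition of total partitions is $\Pi_3$ by Proposition~\ref{eaddition}. The subtlety is that "sum over all parts" is an unbounded iteration, which cannot be done by a fixed first-order formula directly; so instead of literally summing, I would characterize $|\gamma| = |\pi|$ by a monotonicity/bijection argument — e.g. $\gamma$ is the total partition such that the factorial $[\,|\gamma|\,]!$ has the same length as... no; more robustly, use that two partitions have equal cardinality iff they sit at the same height in $\textbf{Y}$, and height is captured by comparing against trivial partitions: $|\pi| = |\gamma|$ iff the unique trivial partition $\leq \pi$ of maximal length has the same length as the unique trivial partition $\leq \gamma$ of maximal length, and for the trivial partition the length \emph{is} the cardinality. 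That is a $\Pi_1$-ish condition given Lemma~\ref{lem:unif}(1). Then $|\pi| \geq r$ reduces to: the maximal trivial partition below $\pi$ has length $\geq r$, i.e., $r[1] \leq \pi$ after identifying $r[1]$ via $\rho = [r]$ and the pair-relation of Lemma~\ref{lem:unif}(3) or Lemma~\ref{epropht}.

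Putting this together, the proposed defining condition for $(\rho,\pi)$ with $\rho$ total is: there exists a trivial partition $\tau$ such that $\tau \leq \pi$, $l(\tau) = |\tau|$ automatically, $|\tau| = |\pi|$ (expressed via $\tau$ being the longest trivial partition below $\pi$, a $\Pi_1$ condition since it just says every trivial partition below $\pi$ is $\leq \tau$), and $|\tau| \geq |\rho|$; but the last clause, relating the cardinality of the trivial partition $\tau$ to that of the total partition $\rho$, is exactly the content of Lemma~\ref{epropht} (total and trivial at the same height is $\Pi_3$) combined with $\leq$ among trivial partitions. So I would phrase it: $\exists \tau\, \exists \tau'$ with $\tau,\tau'$ trivial, $\tau \leq \pi$, $\tau$ maximal such, $|\tau'| = |\rho|$ (the $\Pi_3$ relation of Lemma~\ref{epropht} between the total $\rho$ and the trivial $\tau'$), and $\tau' \leq \tau$. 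Each conjunct is at most $\Pi_3$, the extra existential quantifiers on $\tau,\tau'$ are absorbed, and the whole thing is $\Pi_3$.

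The main obstacle, and the step I would spend the most care on, is the claim that "$\tau$ is the longest trivial partition below $\pi$ implies $|\tau| = |\pi|$" — this is simply false in general ($\pi = [3]$ has longest trivial subpartition $[1]$). So that shortcut does not work, and I must genuinely reconstruct $|\pi|$. The honest fix is to sum the frequencies-times-sizes, and the key technical point is that although the \emph{number} of distinct parts of $\pi$ is unbounded, we can still define $|\pi|$ by saying: $\gamma$ is the total partition such that for every part $[n_i]$ of $\pi$ with frequency $m_i$, ... — and here we need a "running total" predicate, which is where one typically introduces an auxiliary partition encoding the partial sums (a single partition whose parts, read off in order, are the cumulative sums). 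Defining that auxiliary object and verifying it is $\Pi_3$-accessible (using Proposition~\ref{prop:freq} for the frequencies, Proposition~\ref{eblockus} for part-membership, Proposition~\ref{eaddition} for the increment step, and Proposition~\ref{edistinct} to keep the encoding unambiguous) is the real work; once $|\pi|$ is available as (the cardinality of) a total partition $\gamma$, the final comparison $|\gamma| \geq |\rho|$ is just $[\rho] \leq [\gamma]$, which is $\Delta_0$, and the total count of nested quantifiers stays at three alternations because the addition and frequency relations are themselves $\Pi_3$ and enter positively under the outermost existential block.
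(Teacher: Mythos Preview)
Your proposal is an exploratory sketch rather than a proof, and the central difficulty it circles around is exactly the one it never resolves: how to read off $|\pi|$ as (the cardinality of) a total partition without exceeding three quantifier alternations. The specific quantifier accounting you give is incorrect. You repeatedly propose to existentially quantify an auxiliary object ($\gamma$, or $\tau,\tau'$, or the ``running-total'' partition) whose defining conditions invoke the $\Pi_3$ relations of Lemma~\ref{epropht}, Proposition~\ref{eaddition}, and Proposition~\ref{prop:freq}. But $\exists x\,\Pi_3$ is $\Sigma_4$, not $\Pi_3$; the phrase ``the extra existential quantifiers are absorbed'' is simply false when the inner formula already begins with a universal block. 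So even granting that your partial-sums encoding could be made precise, the displayed complexity bound would be $\Sigma_4$, which is not what the proposition asserts.

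The paper avoids this trap by going in the opposite quantifier direction. Rather than \emph{constructing} a witness of cardinality $|\pi|$, it quantifies \emph{universally} over auxiliary partitions $\sigma$ and imposes a monotone closure condition $(\ast\ast)$: for every $r$ with $m[r]\leq\pi$ maximal, $[r]$ must occur in $\sigma$ at least $m$ times. The minimal such $\sigma$ is $\sum_i m_i[n_i]!$, whose \emph{length} (not cardinality) is exactly $\sum_i m_i n_i = |\pi|$. Thus $|\pi|\geq|\rho|$ becomes ``every $\sigma$ satisfying $(\ast\ast)$ has $l(\sigma)\geq|\rho|$''. Because $(\ast\ast)$ sits in the antecedent of a universally quantified implication, its $\Pi_2$ ingredients flip to $\Sigma_2$, and the whole formula lands in $\Pi_3$. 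The key combinatorial idea you are missing is this passage from cardinality to length via the ``sum of factorials'' partition; once you have that, the universal rather than existential framing is what keeps the alternation count at three.
\enlargethispage{1\baselineskip}
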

\begin{proof}
The claim is that $(\rho,\pi)$ is in the relation if and only if $\rho$ is total, and for any partition $\sigma$ which satisfies the conditions below, we have $l(\sigma) \geq |\rho|$:
\begin{itemize}

	\item[($\ast\ast$)] If $[r] \leq \pi$ and $m[r] \leq \pi$ but $(m+1)[r] \nleq \pi$ for some $m$, then $[r]$ is a part of $\sigma$ which appears at least $m$ times.

\end{itemize}

To verify necessity, let $\pi \approx \sum_{i=1}^{t} m_{i}[n_{i}]$ with $|\pi| \geq n = |\rho|$, and suppose $\sigma$ is a partition which satisfies the condition ($\ast\ast$). We wish to show $l(\sigma) \geq n$. Set $M_r = \sum_{i=1}^{r} m_i$ and note that $M_1 < M_2 < \cdots < M_t$. For each $n_{i+1} < r \leq n_{i}$, we see that $M_{i}[r]$ is a maximal $r$-rectangular partition below $\pi$ and so $(\ast\ast)$ implies $[r]$ is a part of $\sigma$ which appears at least $M_{i}$ times; altogether, it must be the case that 
\begin{align*}
\sigma \geq  \sum_{r \leq n_{t}} M_{t}[r]  + \sum_{i=1}^{t-1} \sum_{n_{i+1} < r \leq n_{i}} M_{i}[r] = \sum_{i=1}^{t} m_i[n_i]!
\end{align*}
This implies then that 
\[
l(\sigma) \geq l\left(\sum_{i=1}^{t} m_i[n_i]!\right) = \sum_{i=1}^{t} m_{i}n_{i} = |\pi| \geq n.
\]

To establish sufficiency, suppose $\rho$ is total but $|\pi| < n = |\rho|$. Let $\pi = \sum_{i=1}^{t} m_{i}[n_{i}]$. Set $\sigma = \sum_{i=1}^{t} m_{i}[n_{i}]!$ and observe that $l(\sigma) = \sum_{i=1}^{t} m_{i}n_{i} < n$. Suppose $[s] \leq \pi$ and let $k$ be the smallest index for which $[s] \leq [n_{k}]$. For the part $[n_{k}]$, we see that $\left(\sum_{i=1}^{k} m_{i} \right)[n_{k}] \leq \pi$ is maximal. If $r[s] \nleq \pi$ for $r > \sum_{i=1}^{k} m_{i}$, then by definition of the canonical representation, we must have $[s] \nleq [n_{k+1}]$ which contradicts the choice of $[n_{k}]$; therefore, $\left(\sum_{i=1}^{k} m_{i}\right)[s] \leq \pi$ is maximal among $s$-rectangular partitions. Notice that $[n_{k}]$ appears in $\sigma$ for each factorial $[n_r]!$ where $n_r > n_k$; that is, $[n_{k}]$ appears $\sum_{i=1}^{k} m_{i}$ times which is exactly how often $[s]$ appears as a part in $\sigma$. We have shown the partition $\sigma$ satsifies ($\ast\ast$).
\end{proof}

We note in passing that the previous argument essentially shows the $\Pi_3$-definability of $\sigma =\sum_{i=1}^{t} m_i[n_i]!$ given $\pi = \sum_{i=1}^{t} m_{i}[n_{i}]$.

\begin{proposition}
$\left\{ (\rho, \pi): \rho \hbox{ is complete and } \left|\pi\right| = \left|\rho\right| \right\}$ is $\Pi_3$-definable in $\textbf{Y}^{\ast}$.
\end{proposition}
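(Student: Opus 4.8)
The plan is to reduce membership in this relation to a short list of conditions, each built out of relations already shown to be at most $\Pi_3$-definable---chiefly Proposition~\ref{prop:compare}, the part-membership relation of Proposition~\ref{eblockus}, and the factorial-sum construction noted in the remark following Proposition~\ref{prop:compare}---and then to check that the list assembles into a single $\Pi_3$-formula. Throughout, $\rho$ is the partition required to be complete and $\pi$ the one whose cardinality is to be tied to $|\rho|$.

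First I would isolate a low-complexity definition of ``$\rho$ is complete''. Reading the multiplicity of each part of $\rho$ off its maximal rectangular sub-partitions, exactly as in Propositions~\ref{prop:freq} and~\ref{prop:compare}, completeness becomes a condition of the following shape: $[1]$ is a part of $\rho$, and for every nonempty total partition $\gamma$ whose total cover is a part of $\rho$, the largest sub-partition of $\rho$ all of whose parts have at most $|\gamma|$ elements has cardinality at least $|\gamma|$. Here ``the largest sub-partition of $\rho$ with all parts of size at most $|\gamma|$'' is $\Pi_1$-definable from $\rho$ and $\gamma$ by Lemma~\ref{lem:unif}(2) together with a maximality clause; ``$\delta$ is a part of $\rho$'' is $\Pi_2$ by Proposition~\ref{eblockus}; and ``the cardinality of $\mu$ is at least $|\gamma|$'' for the total partition $\gamma$ is exactly the $\Pi_3$-relation of Proposition~\ref{prop:compare}. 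Placing the $\Pi_2$-pieces and the maximality clause in the precedent of the implication, and using the $\Sigma_2$-form of rectangularity from Lemma~\ref{lem:rect} wherever a maximal rectangular sub-partition is asserted, the outer universal quantifier over $\gamma$ keeps everything inside $\Pi_3$, so ``$\rho$ is complete'' is $\Pi_3$-definable in $\textbf{Y}^{\ast}$.

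Second---and this is the crux---I would express ``$|\pi|=|\rho|$'' by a $\Pi_3$-formula. The naive rendering ``$|\pi|\ge|\rho|$ and not $|\pi|\ge|\rho|+1$'' only gives a Boolean combination of a $\Pi_3$- and a $\Sigma_3$-formula, which need not be $\Pi_3$; likewise ``$\forall\gamma\,(\gamma$ total $\longrightarrow(|\pi|\ge|\gamma|\leftrightarrow|\rho|\ge|\gamma|))$'' puts a bi-implication between two $\Pi_3$-formulas under a quantifier and costs an extra alternation. Instead I would aim for a single $\forall\exists\forall$-shaped characterization in the spirit of Proposition~\ref{prop:compare}: using the identity $l(\sum_i m_i[n_i]!)=|\pi|$ recorded after that proposition, the cardinality of $\pi$ is witnessed by the \emph{length} of the factorial-sum partition $\widehat\pi$ of $\pi$, which reduces ``$|\pi|=|\rho|$'' to ``$l(\widehat\pi)=|\rho|$''; one then describes $\widehat\pi$ by the conditions of the remark (distinct parts, $\Pi_2$ by Proposition~\ref{edistinct}; prescribed part-membership, $\Pi_2$ by Proposition~\ref{eblockus}) inside the precedent of one outermost universal quantifier, reads off its length via a trivial partition (using Lemma~\ref{epropht} to pass between the length $m$ and the total partition $[m]$), and invokes Proposition~\ref{prop:compare} one last time---together with completeness of $\rho$, which should make the matching lower bound for $|\rho|$ automatic---to turn ``$l(\widehat\pi)=|\rho|$'' into a $\Pi_3$-consequent. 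Conjoining this with the $\Pi_3$-formula for ``$\rho$ is complete'' yields a $\Pi_3$-definition of the relation.

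The main obstacle I expect is exactly this last point. Equality of cardinalities is intrinsically a two-sided comparison, and all of the difficulty lies in realizing it by a formula of alternation depth three rather than by a Boolean combination that escapes $\Pi_3$; as in Propositions~\ref{eaddition},~\ref{prop:freq} and~\ref{prop:compare}, the device is to funnel every auxiliary partition through the precedent of a single outermost universal quantifier so that the nested quantifiers inherited from Propositions~\ref{edistinct},~\ref{eblockus} and~\ref{prop:compare} contribute no new alternation, and the substantive step to verify is that completeness of $\rho$ genuinely supplies the half of the equality that would otherwise be only $\Sigma_3$-expressible.
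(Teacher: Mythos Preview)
You have misread the hypothesis on $\rho$. In this paper ``complete'' is a synonym for ``total'' (compare the label \texttt{ecomptriv} of Lemma~\ref{ecomptriv}, which defines the total and trivial partitions); so $\rho$ is simply a partition of the form $[n]$, and ``$\rho$ is complete'' is $\Delta_0$ by Lemma~\ref{ecomptriv}. Your entire first paragraph, which builds an elaborate $\Pi_3$ characterization of some other notion of completeness, is therefore unnecessary and aimed at the wrong target.

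On the main point, the paper's proof is precisely the ``naive rendering'' you dismiss: it asserts $|\pi|\geq|\rho|$ (which is $\Pi_3$ by Proposition~\ref{prop:compare}) together with $|\pi|\ngeq|\rho|+1$, and stops there. You are right that, read literally, this is a conjunction of a $\Pi_3$-formula with a $\Sigma_3$-formula, and such a conjunction is not automatically $\Pi_3$; the paper does not address this, so in that respect you are being more careful about quantifier alternation than the paper itself. Your proposed workaround via the factorial-sum partition $\widehat\pi$ with $l(\widehat\pi)=|\pi|$ is in the right spirit, but as written it remains a sketch: you still need to pass between the trivial partition of length $|\rho|$ and the total partition $\rho$, and the only tool you cite for that is Lemma~\ref{epropht}, itself $\Pi_3$, which when placed under a further universal quantifier threatens the same alternation blow-up you are trying to avoid. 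If you want to genuinely beat the paper's level of rigor here, you should exhibit the single $\forall\exists\forall$ formula explicitly rather than gesture at it.
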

\begin{proof}
Using the $\Pi_3$-definition in Proposition~\ref{prop:compare}, we would have $\left|\pi\right| \geq \left|k\right|$ but $\left|\pi\right| \ngeq \left|k\right| + 1$.
\end{proof}

We can now interpret multiplication.

\begin{proposition}\label{emultipli}
$\left\{(\rho,\sigma,\pi): k,\rho,\pi \hbox{ are total and } \left|\pi\right|  = \left|\rho\right| \left|\sigma\right| \right\}$ is $\Pi_3$-definable in $\textbf{Y}^{\ast}$.
\end{proposition}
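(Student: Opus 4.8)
The plan is to reduce multiplication $|\pi| = |\rho|\,|\sigma|$ to a statement about the length of a partition built by taking $|\sigma|$ many copies of the factorial $[\,|\rho|\,]!$, in the spirit of how the factorial $[n]!$ was used to ``package'' the sum $1 + 2 + \cdots + n$. Concretely, recall from Proposition~\ref{prop:factorial} that $[\,|\rho|\,]!$ is $\Pi_3$-definable from $\rho$, and that for a triangular factorial one has $l([n]!) = n(n+1)/2$, which is not quite what we want; instead I would use the $n$-rectangular scaling already supplied by Lemma~\ref{lem:unif}(3), which gives a $\Delta_2$-definition of the triple $(\rho,\sigma,\pi')$ with $\rho = [m]$, $\sigma = n[1]$, $\pi' = n[m]$, so that $l(\pi') = n$ and $|\pi'| = nm$. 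Thus the cleanest route is: given total $\rho$ with $|\rho| = m$ and total $\sigma$ with $|\sigma| = n$, first pass to the trivial partition $n[1]$ at the same height as $\sigma$ (Lemma~\ref{epropht}), then invoke Lemma~\ref{lem:unif}(3) to obtain the rectangular partition $\tau = n[m]$, and finally note $|\tau| = mn$. Comparing $|\tau|$ with the total partition $\pi$ is then handled by Proposition~\ref{prop:compare} together with the covering relation: $|\pi| = |\tau|$ iff $|\pi| \geq |\tau|$ and not $|\pi| \geq |\tau| + 1$, where the ``$+1$'' is the cover and $|\tau|$ itself is compared to a total partition of the same height via the ``complete/total same height'' lemma.

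In more detail, the steps in order are: (i) define, from total $\rho$, the factorial-free rectangular witness: using Lemma~\ref{epropht} produce the trivial partition $\sigma' = n[1]$ with $|\sigma'| = |\sigma|$; (ii) using Lemma~\ref{lem:unif}(3) with $[m] = \rho$ (total, $b = m$) and $\sigma' = n[1]$, obtain $\tau = n[m]$, a $\Delta_2$ relation in $(\rho, \sigma', \tau)$; (iii) observe $|\tau| = mn = |\rho||\sigma|$ purely arithmetically; (iv) express $|\pi| = |\tau|$ by saying $\pi$ is total and, for the total partition $\lambda$ with $|\lambda| = |\tau|$ (obtainable since $|\tau| \geq 1$), we have $\pi = \lambda$ — but since individual partitions may not be comparable across conjugation, it is cleaner to phrase this as: there exists a total $\lambda$ with $|\pi| \geq |\lambda|$ and $|\pi| \not\geq |\lambda| + 1$ (using Proposition~\ref{prop:compare} and the cover of $\lambda$) and simultaneously $\lambda$ has the same height as $\tau$ (via the analogue of the ``total–trivial same height'' lemma, now ``total–rectangular same height'', which is a short consequence of the frequency and factorial machinery). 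The whole formula is an existential quantification (over $\sigma'$, $\tau$, $\lambda$) of a conjunction of $\Pi_3$ conditions with the existential witnesses appearing positively, hence collapses to $\Pi_3$.

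The bookkeeping obstacle — and the one place I would be careful — is the syntactic complexity accounting: several of the ingredient relations ($\Pi_3$ for factorials via Proposition~\ref{epropht}, $\Pi_3$ for the comparison in Proposition~\ref{prop:compare}, $\Delta_2$ for Lemma~\ref{lem:unif}(3)) must be combined under a block of existential quantifiers without pushing the prenex rank above $\Sigma_3$ or $\Pi_3$. The key observations that keep this under control are that (a) an existential quantifier in front of a $\Pi_3$-formula still lies in $\Sigma_4 \cap \Pi_3$... in fact one wants to be slightly more careful: $\exists \bar y\, \Pi_3$ is $\Sigma_4$ in general, so the witnesses $\sigma', \tau, \lambda$ should instead be pinned down by $\Delta_2$ or $\Pi_2$ descriptions wherever possible (Lemma~\ref{lem:unif}(3) is $\Delta_2$, the trivial partition at a given height is $\Pi_1$ by Lemma~\ref{lem:unif}(1)) so that only genuinely $\Pi_3$ assertions (the factorial-based height comparisons) remain, and those are placed in the scope of no further quantifier or in the precedent of an implication where their universal part flips. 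Alternatively — and this is probably the smoothest write-up — I would avoid introducing $\lambda$ at all: define $|\pi| = |\tau|$ directly by ``$\pi$ is total, $|\pi| \geq |\tau|$ via Proposition~\ref{prop:compare} applied with $\pi$ in the total slot and $\tau$ in the general slot, and $\neg(|\pi| \geq |\tau^{+}|)$ where $\tau^{+}$ covers $\tau$'' — but since $\tau$ is not total, one must first transfer $|\tau|$ to its total representative, which reintroduces exactly the ``total–rectangular same height'' step. So the real content of the proof is this single auxiliary fact, that $\{(\rho,\pi) : \rho \text{ total}, \pi \text{ rectangular}, |\rho| = |\pi|\}$ is $\Pi_3$-definable, which follows by the same factorial/length argument used in Lemma~\ref{epropht} and the remark after Proposition~\ref{prop:compare}; everything else is assembly.
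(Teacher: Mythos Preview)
Your core idea is exactly the paper's: form the rectangular partition $\beta = |\rho|[|\sigma|]$ via Lemma~\ref{lem:unif}(3), note $|\beta| = |\rho|\,|\sigma|$, and then invoke the $\Pi_3$ height-equality relation (the unnamed proposition just before Proposition~\ref{emultipli}) to assert $|\pi| = |\beta|$. The paper compresses all of this into a single line; everything you write under (i)--(iv) is just the unpacking of that line.

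Where you drift is in the long discussion of quantifier bookkeeping. Your worry is legitimate --- the paper does not spell out why the existential witnesses $\sigma',\tau$ (or $\beta$) do not push the formula to $\Sigma_4$ --- but your proposed resolutions wander and never quite land. The clean way through is the one you almost state: since $\beta$ is \emph{unique}, write the formula with a universal quantifier over $\beta$ and put the defining condition for $\beta$ in the \emph{antecedent} of an implication whose consequent is the $\Pi_3$ height-equality. For this to stay $\Pi_3$ you need a $\Sigma_3$ (or lower) description of ``$\beta = |\rho|[|\sigma|]$''. You get one by combining the $\Sigma_2$ rectangularity formula, the $\Pi_1$ condition $b(\beta)=|\sigma|$, and --- for $l(\beta)=|\rho|$ --- the $\Sigma_3$ formula $\exists\gamma\,[\gamma=[|\rho|]! \wedge (\forall\tau\ \text{trivial}\to(\tau\le\beta\leftrightarrow\tau\le\gamma))]$, using the $\Pi_2$ definition of factorials from Proposition~\ref{prop:factorial}. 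Negated in the antecedent this is $\Pi_3$, the consequent is $\Pi_3$, their disjunction is $\Pi_3$, and the outer $\forall\beta$ keeps it $\Pi_3$. That single sentence replaces your last two paragraphs; the detour through a separate ``total--rectangular same height'' lemma is unnecessary, since the existing Proposition (total $\rho$, arbitrary $\pi$, $|\rho|=|\pi|$) already handles rectangular $\beta$ directly.
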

\begin{proof}
$(\rho,\sigma,\pi)$ is in this relation if and only if $\rho,\sigma,\pi$ are total and $|\pi| = |\beta|$ where $\beta \approx |\rho|[|\sigma|]$.
\end{proof}

Let $\left\langle \mathds{N}, +, \times \right\rangle$ denote the structure over the set of non-negative integers such that the operations of addition and multiplication have their usual meaning. Propositions~\ref{ecomptriv}, \ref{eaddition}, and \ref{emultipli} state that we have an interpretation of $\left\langle \mathds{N}, +, \times \right\rangle$ into $\textbf{Y}^{\ast}$ in which the ternary relations for addition and multiplication are definable over the total partitions by $\Pi_3$-formulas. Undecidability of the positive $\Sigma_1$-theory of $\left\langle \mathds{N}, +, \times \right\rangle$ established in Matiyasevich \cite{matiy} yields the following:

\begin{theorem}\label{thm:sigma1}
The $\Sigma_4$-theory of $\textbf{Y}^{\ast}$ is undecidable.	
\end{theorem}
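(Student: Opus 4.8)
The plan is to transfer undecidability from natural arithmetic to $\textbf{Y}^{\ast}$ via the interpretation already constructed, taking care that the syntactic complexity of the translated formulas lands in $\Sigma_4$. The key observation is that Propositions~\ref{ecomptriv}, \ref{eaddition} and \ref{emultipli} give a relativized interpretation of $\left\langle \mathds{N}, +, \times \right\rangle$ into $\textbf{Y}^{\ast}$: the universe $\mathds{N}$ is represented by the set of total partitions, which is $\Delta_0$-definable by Lemma~\ref{ecomptriv}, and under the bijection $[n] \mapsto n$ the graphs of addition and multiplication are $\Pi_3$-definable. So every first-order sentence of arithmetic has a faithful translation into $\textbf{Y}^{\ast}$, and the map on sentences is computable; consequently, if the translation of a $\Sigma_1$-sentence of arithmetic always lies (up to logical equivalence) in $\Sigma_4$, then a decision procedure for the $\Sigma_4$-theory of $\textbf{Y}^{\ast}$ would decide the $\Sigma_1$-theory of $\left\langle \mathds{N},+,\times\right\rangle$, contradicting Matiyasevich~\cite{matiy}.

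First I would make the interpretation explicit. A $\Sigma_1$-sentence of arithmetic has the shape $\exists n_1 \cdots \exists n_k\, \psi(n_1,\dots,n_k)$ with $\psi$ quantifier-free in the language $\{+,\times,=\}$; by introducing existentially quantified variables for each subterm we may assume $\psi$ is a positive Boolean combination of atomic formulas of the forms $n_i + n_j = n_l$, $n_i \times n_j = n_l$ and $n_i = n_l$ (this only adds more leading existential quantifiers). Now translate: replace each arithmetic variable $n_i$ by a partition variable $x_i$, relativize each quantifier to total partitions using the $\Delta_0$-formula of Lemma~\ref{ecomptriv}, and replace each atomic formula by the corresponding $\Pi_3$-definition from Proposition~\ref{eaddition} or Proposition~\ref{emultipli} (equality of total partitions is $\Delta_0$). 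The resulting $\textbf{Y}^{\ast}$-sentence is equisatisfiable with the original by the correctness of those propositions.

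Next I would bound the complexity. The body of the translated sentence is a positive Boolean combination of $\Pi_3$-formulas and $\Delta_0$-formulas; a positive (conjunction/disjunction only) combination of $\Pi_3$-formulas is again $\Pi_3$, since $\Pi_3$ is closed under finite conjunction and disjunction (merge the blocks of like quantifiers, using that $\Pi_3 \supseteq \Sigma_2 \supseteq \Pi_1 \supseteq \Delta_0$ to pad the shorter prefixes). The leading existential quantifiers $\exists x_1 \cdots \exists x_k$, together with the bounded relativizing conjuncts ``$x_i$ is total'' which are $\Delta_0$, prepend one existential block in front of a $\Pi_3$-matrix, yielding a $\Sigma_4$-sentence. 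Hence the translation carries $\Sigma_1$-sentences of arithmetic to $\Sigma_4$-sentences of $\textbf{Y}^{\ast}$, and the reduction is computable, so the $\Sigma_4$-theory of $\textbf{Y}^{\ast}$ is undecidable.

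The main obstacle is bookkeeping rather than mathematics: one must be careful that relativizing the innermost existential quantifiers of arithmetic (those introduced to flatten terms) does not push a universal-over-precedent implication into the wrong level, and that the $\Pi_3$-definitions from Propositions~\ref{eaddition} and \ref{emultipli} really can be conjoined without quantifier-prefix growth. Both points are handled by the standard prenexing lemmas for the arithmetical hierarchy—$\Sigma_n$ and $\Pi_n$ are closed under $\wedge$ and $\vee$, and an existential block in front of $\Pi_n$ gives $\Sigma_{n+1}$—so no genuinely new combinatorics is needed beyond Section~\ref{sec:3}. I would close by remarking that the same argument with the $\Pi_3$-graphs shows more precisely that the $\Sigma_1$-theory of arithmetic reduces to the $\exists^{*}\Pi_3$-fragment of $\textbf{Y}^{\ast}$, which is what is recorded in the statement.
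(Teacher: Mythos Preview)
Your proposal is correct and follows essentially the same route as the paper: both use the $\Delta_0$-definable domain of total partitions together with the $\Pi_3$-definitions of the graphs of addition and multiplication (Propositions~\ref{eaddition} and \ref{emultipli}) to reduce the positive $\Sigma_1$-theory of $\left\langle \mathds{N}, +, \times \right\rangle$ (undecidable by Matiyasevich) to the $\Sigma_4$-theory of $\textbf{Y}^{\ast}$. Your write-up simply makes explicit the prenex bookkeeping---closure of $\Pi_3$ under positive Boolean combinations and the effect of prefixing an existential block---that the paper leaves implicit in the sentence preceding the theorem.
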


Since the elementary theory of a fixed structure is complete, by \cite[Thm 1, Thm 7, Thm 10]{interp} the above interpretation establishes the following:

\begin{theorem}\label{thm:elementary}
The elementary theory of Young's lattice is undecidable and inherently non-finitely axiomatizable.	
\end{theorem}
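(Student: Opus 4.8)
\textbf{Proof plan for Theorem~\ref{thm:elementary}.}

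The plan is to leverage the interpretation of $\left\langle \mathds{N}, +, \times \right\rangle$ into $\textbf{Y}^{\ast}$ already assembled in Propositions~\ref{ecomptriv}, \ref{eaddition}, and \ref{emultipli}, together with the elimination of the extra constant $[1]+[1]$ furnished by Proposition~\ref{indivdefine}. Since that proposition shows every partition -- in particular $[1]+[1]$ -- is first-order definable in $\textbf{Y}^{\ast}$, and conversely the constant $[1]+[1]$ names a single element of $\textbf{Y}$, the structures $\textbf{Y}$ and $\textbf{Y}^{\ast}$ are first-order interdefinable: a sentence true in one can be effectively translated to an equivalent sentence in the other by substituting the defining formula $\phi_{[1]+[1]}(x)$ for the constant. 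Hence decidability questions for $\mathrm{Th}(\textbf{Y})$ and $\mathrm{Th}(\textbf{Y}^{\ast})$ coincide, and it suffices to argue with $\textbf{Y}^{\ast}$.

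First I would record that the interpretation of arithmetic is a genuine (one-dimensional, parameter-free) interpretation in the sense of \cite{interp}: the domain formula is ``$x$ is total'' ($\Delta_0$-definable by Lemma~\ref{ecomptriv}), the bijection $\#$ sends a total partition $[n]$ to $n \in \mathds{N}$, and the graphs of $+$ and $\times$ pull back to the definable ternary relations of Propositions~\ref{eaddition} and \ref{emultipli}; equality is interpreted by genuine equality. Therefore there is a computable map $\theta \mapsto \theta^{\ast}$ from sentences in the language $\{+,\times\}$ to sentences over $\left\langle \mathcal P, \leq, [1]+[1]\right\rangle$ with $\left\langle \mathds{N},+,\times\right\rangle \vDash \theta$ iff $\textbf{Y}^{\ast} \vDash \theta^{\ast}$. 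If $\mathrm{Th}(\textbf{Y}^{\ast})$ were decidable, then composing the decision procedure with $\theta \mapsto \theta^{\ast}$ would decide $\mathrm{Th}(\left\langle \mathds{N},+,\times\right\rangle)$, contradicting the classical undecidability of true arithmetic (which follows from Matiyasevich \cite{matiy}, or already from Church--Turing). Undecidability of $\mathrm{Th}(\textbf{Y})$ follows by the interdefinability of the previous paragraph.

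For inherent non-finite-axiomatizability, I would invoke the same interpretation together with the completeness of $\mathrm{Th}(\textbf{Y})$ (the theory of a single structure is complete). The relevant fact -- this is the content of \cite[Thm 1, Thm 7, Thm 10]{interp} -- is that if a finitely axiomatizable theory interprets a theory $T$, then $T$ is ``relatively finitely axiomatizable'' in a way that forces essential undecidability to be inherited; concretely, an essentially undecidable, recursively axiomatizable theory such as Robinson arithmetic $Q$ cannot be interpreted in any consistent finitely axiomatizable theory unless that theory is itself essentially undecidable, and more to the point a complete theory that interprets $Q$ cannot be finitely axiomatizable, since a complete finitely axiomatizable theory is decidable. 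Since $Q$ is interpretable in $\mathrm{Th}(\textbf{Y}^{\ast})$ through the restriction of our arithmetic interpretation, and $\mathrm{Th}(\textbf{Y}^{\ast})$ is complete, it cannot be finitely axiomatizable; moreover every consistent extension inherits the interpretation of $Q$ and hence is undecidable, which is exactly the assertion that the theory is inherently (i.e. hereditarily) non-finitely axiomatizable. Transporting back along $\theta \mapsto \theta^{\ast}$ and the constant-elimination gives the statement for $\textbf{Y}$ itself.

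The main obstacle is bookkeeping rather than mathematics: one must check that the clauses of \cite{interp} apply to a parameter-free interpretation of $Q$ (not merely of full arithmetic), and that the single added constant truly costs nothing -- i.e. that Proposition~\ref{indivdefine} gives a \emph{formula} defining $[1]+[1]$ in the pure order language, so that the translation in either direction is effective and preserves the relevant syntactic and semantic properties. Once those two points are spelled out, the theorem is immediate from the cited results; I would therefore keep the proof short, citing \cite{matiy} for undecidability of arithmetic and \cite{interp} for the transfer of inherent non-finite-axiomatizability, and merely remark that the arithmetic interpretation together with Proposition~\ref{indivdefine} supplies all the hypotheses.
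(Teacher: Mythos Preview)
Your overall strategy matches the paper's: the paper's proof of Theorem~\ref{thm:elementary} is literally the one-line observation that the theory of a fixed structure is complete, followed by the citation of \cite[Thm~1, Thm~7, Thm~10]{interp} applied to the arithmetic interpretation of Propositions~\ref{ecomptriv}, \ref{eaddition} and \ref{emultipli}. Your expansion of what those citations deliver is accurate.

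There is, however, a genuine error in your passage from $\textbf{Y}^{\ast}$ back to $\textbf{Y}$. You assert that Proposition~\ref{indivdefine} ``gives a formula defining $[1]+[1]$ in the pure order language'' and use this to conclude that $\textbf{Y}$ and $\textbf{Y}^{\ast}$ are first-order interdefinable. This is false: Proposition~\ref{indivdefine} defines individual partitions only in $\textbf{Y}^{\ast}$, and the singleton $\{[1]+[1]\}$ is \emph{not} definable in $\textbf{Y}$, since conjugation is a nontrivial automorphism of $\textbf{Y}$ exchanging $[1]+[1]$ and $[2]$. The paper states exactly this obstruction in the paragraph introducing $\textbf{Y}^{\ast}$ before Lemma~\ref{ecomptriv}.

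The repair is easy and is implicit in the paper's appeal to \cite{interp}, whose framework accommodates interpretations with parameters. If you want an explicit reduction, use instead that the two-element set $\{[2],[1]+[1]\}$ \emph{is} $\Pi_1$-definable in $\textbf{Y}$ (as the covers of $[1]$), and that conjugation swaps its members; hence for any sentence $\varphi$ of $\textbf{Y}^{\ast}$, writing $\varphi(c)$ for the result of replacing the constant by a fresh variable $c$, one has $\textbf{Y}^{\ast}\vDash\varphi$ iff $\textbf{Y}\vDash\forall c\,(\phi_{\mathrm{Cov}}([1],c)\rightarrow\varphi(c))$. This is an effective reduction of $\mathrm{Th}(\textbf{Y}^{\ast})$ to $\mathrm{Th}(\textbf{Y})$, after which the rest of your argument goes through unchanged.
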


%%%%%%%%%%%%%%%%%%%%%%%%%%%%%%%%%%%%%%%%%%%%%%%%%%%%%%%%%%%%%%%%%%%%%%%%%%%%%%%%%%%%%%%%%%%%%%%%%%%%%              
                            
\section{Maximal Definability Property}\label{sec:4}

In this section, we establish the maximal definability property for $\textbf{Y}^{\ast}$. Enumerate the primes $\{p_1,p_2,p_3,\ldots\}=\{2,3,5,\ldots\}$.

\begin{definition}
Define $\#: \mathcal P \rightarrow \mathds{N}$ in the following manner:
\begin{enumerate}

	\item $\#\emptyset=0$
	
	\item If $\sigma= \sum_{i=1}^{k} m_i[n_i]$, then 
\[
 \#\pi = 
  \begin{cases} 
   \prod_{i=1}^k p^{m_i}_{n_i} & \text{if some } n_i \neq 1  \\
   p^{m-1}_1       & \text{if } \sigma=m[1]
  \end{cases}
\]

\end{enumerate}
\end{definition}

It is easy to see that $\#: \mathcal P \rightarrow \mathds{N}$ is a bijection. Define the structure $\left\langle \mathcal P, +_{\#}, \times_{\#} \right\rangle$ where the ternary relations $+_{\#}$ and $\times_{\#}$ on $\mathcal P$ are defined as
\begin{align}
(\rho,\sigma,\pi) \in +_{\#}  \quad  &\Leftrightarrow \quad \#\rho + \#\sigma = \#\pi \\
(\rho,\sigma,\pi) \in \times_{\#} \quad  &\Leftrightarrow \quad \#\rho \cdot \#\sigma = \#\pi
\end{align}
The goal is to establish the equality of definable relations $\mathrm{Def}(\mathcal P,\leq,[1]+[1]) = \mathrm{Def}(\mathcal P, +_{\#},\times_{\#})$. For this end, we shall make use of the expressive power of definable relations in arithmetic. 

A relation $R \subseteq \mathds{N}^{k}$ is \emph{recursive} if there exists a Turing machine which always halts and accepts exactly the elements in the relation $R$. A fundamental result \cite[Chapter 10.6 ]{flum} about the definable relations in arithmetic states that for any recursive relation $R \subseteq \mathds{N}^{k}$, there is a first-order formula $\phi_{R}(x_1,\ldots,x_k)$ over the structure $\left\langle \mathds{N}, +, \times \right\rangle$ such that 
\[
(n_1,\ldots,n_k) \in R \quad \Leftrightarrow \quad \left\langle \mathds{N}, +, \times \right\rangle \vDash \phi_{R}(n_1,\ldots,n_k).
\] 
This gives a potent flexibility in determining definable relations; for example, recursive sets would include the above enumeration of primes, any particular fixed non-negative integer $\{n\}$, and a ternary relation $\texttt{Primexp} \subseteq \mathds{N}^3$ such that $(i,m,n) \in \texttt{Primexp}$ if and only if the $i$-th prime appears in the prime factorization of $n$ with exponent $m$. It is not to difficult to see that there is a Turing machine $T_{\mathrm{ord}}$ which always halts that can take a pair of non-negative integers $(m,n)$, compute their prime factorizations and the corresponding canonical representations for partitions $\sigma$ and $\pi$ such that $\#\sigma=m$, $\#\pi=n$ and then verifies if the conditions in Eqn.~\ref{eqn:1} are satisfied.  If we let $\texttt{ord} \subseteq \mathds{N}^2$ be the recursive relation determined by $T_{\mathrm{ord}}$, then the associated first-order formula $\phi_{\mathrm{ord}}$ defines the partial-order $\leq$ over $\mathcal P$. Altogether, we see that $\mathrm{Def}(\mathcal P,\leq,[1]+[1]) \subseteq \mathrm{Def}(\mathcal P, +_{\#},\times_{\#})$.

For the reverse inclusion, we require the interpretation of arithmetic developed in the previous section. Let \texttt{Add} denote the relation defined in Proposition~\ref{eaddition} by the formula $\phi_{\texttt{Add}}(x,y,z)$ and \texttt{Mult} the relation defined in Proposition~\ref{emultipli} by the formula $\phi_{\texttt{Mult}}(x,y,z)$. The interpretation of the arithmetic operations has the pleasing property that 
\begin{align}\label{Eqn:7}
m+n=r  \quad  &\Leftrightarrow \quad ([m],[n],[r]) \in \texttt{Add} \\ 
m\cdot n=r \quad  &\Leftrightarrow \quad ([m],[n],[r]) \in \texttt{Mult} 
\end{align}
Once addition and multiplication are interpreted, we have by the standard process (\cite{flum}) a translation between the first-order formulas in arithmetic and $\textbf{Y}^{\ast}$ which we formalize in the following lemma:

\begin{lemma}\label{lem:translate}
For any first-order formula $\phi(x_1,\ldots,x_k)$ in the language $(+,\times)$ of arithmetic, there is first-order formula $\psi_{\phi}(x_1,\ldots,x_k)$ in the language $(\leq,[1]+[1])$ of $\textbf{Y}^{\ast}$ such that
\begin{align}
\left\langle \mathds{N}, +, \times \right\rangle \vDash \phi(n_1,\ldots,n_k) \quad \Leftrightarrow \quad \textbf{Y}^{\ast}\vDash \psi_{\phi}([n_1],\ldots,[n_k])
\end{align}
for all $n_1,\ldots,n_k \in \mathds{N}$.
\end{lemma}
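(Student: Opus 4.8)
The plan is to carry out the standard inductive translation of first-order formulas from arithmetic into $\textbf{Y}^{\ast}$, using the total partitions as a ``copy'' of $\mathds{N}$ via the correspondence $n \mapsto [n]$, and using the relations $\texttt{Add}$ and $\texttt{Mult}$ of Propositions~\ref{eaddition} and \ref{emultipli} as surrogates for the graphs of $+$ and $\times$. The key point that makes this work cleanly is Eqn.~\ref{Eqn:7}: the interpretation is ``literal'' on totals, so that $[m]$ plays the role of the number $m$ and the ternary relations restricted to totals are exactly the graphs of the arithmetic operations.

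\medskip
\noindent\textbf{Proof (sketch).}
First I would replace the given formula $\phi$ by a logically equivalent formula in a normal form in which $+$ and $\times$ occur only in atomic subformulas of the shape $u+v=w$ and $u\cdot v = w$ with $u,v,w$ variables; this is the usual preprocessing (introduce fresh existentially quantified variables to name the values of nested terms), and it does not change the class of relations defined. Now define the translation $\psi \mapsto \psi^{\ast}$ by recursion on the structure of $\psi$:
\begin{itemize}
\item $(u+v=w)^{\ast}$ is $\phi_{\texttt{Add}}(u,v,w)$ and $(u\cdot v = w)^{\ast}$ is $\phi_{\texttt{Mult}}(u,v,w)$;
\item the Boolean connectives are passed through unchanged, $(\psi_1 \wedge \psi_2)^{\ast} = \psi_1^{\ast}\wedge\psi_2^{\ast}$, $(\neg\psi)^{\ast}=\neg\psi^{\ast}$, etc.;
\item quantifiers are relativized to the total partitions: writing $\mathrm{Tot}(x)$ for the $\Delta_0$-formula $x \ngeq [1]+[1]$ of Lemma~\ref{ecomptriv}, set $(\exists u\,\psi)^{\ast} = \exists u\,(\mathrm{Tot}(u)\wedge\psi^{\ast})$ and $(\forall u\,\psi)^{\ast} = \forall u\,(\mathrm{Tot}(u)\rightarrow\psi^{\ast})$.
\end{itemize}
Then I would prove by induction on $\psi$ the statement: for all $n_1,\dots,n_k\in\mathds{N}$,
\[
\left\langle \mathds{N}, +, \times \right\rangle \vDash \psi(n_1,\dots,n_k) \quad\Leftrightarrow\quad \textbf{Y}^{\ast}\vDash \psi^{\ast}([n_1],\dots,[n_k]).
\]
The atomic case is exactly Eqn.~\ref{Eqn:7}; the Boolean cases are immediate from the induction hypothesis; and the quantifier cases use the fact that $n\mapsto[n]$ is a bijection from $\mathds{N}$ onto the set of total partitions (together with the defining property of $\mathrm{Tot}$), so that ranging over all naturals on the left matches ranging over all total partitions on the right. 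Taking $\psi_{\phi}:=\phi^{\ast}$ for the normalized $\phi$ gives the lemma. \hfill$\square$

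\medskip
The main obstacle — and the only place requiring any care — is the quantifier step: one must be sure that the relativizing predicate $\mathrm{Tot}$ picks out precisely the image $\{[n] : n\in\mathds{N}\}$ of the embedding, so that no ``junk'' elements of $\mathcal P$ are quantified over and nothing in the intended range is missed. This is exactly what Lemma~\ref{ecomptriv} and the bijectivity of $n\mapsto[n]$ onto the totals supply, so the difficulty is essentially bookkeeping rather than substance. A secondary point worth stating explicitly is that the preprocessing into the normal form with only variable-arguments in the atoms is harmless precisely because equality of totals is definable (it is inherited from $\leq$) and because $\texttt{Add}$, $\texttt{Mult}$ are genuine relations, not term-forming operations, in $\textbf{Y}^{\ast}$; this is why the recursion has to thread the values of compound arithmetic terms through fresh bound variables rather than nesting $\phi_{\texttt{Add}}$ and $\phi_{\texttt{Mult}}$ syntactically.
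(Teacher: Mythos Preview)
Your proposal is correct and is precisely the ``standard process'' the paper invokes: the paper does not actually give a proof of this lemma but simply cites the textbook \cite{flum} for the routine translation once $\texttt{Add}$ and $\texttt{Mult}$ are available over the totals. You have spelled out in full the inductive relativization argument that the paper leaves implicit, including the normalization step and the use of $\mathrm{Tot}$ from Lemma~\ref{ecomptriv} to bound the quantifiers; nothing is missing and nothing diverges from the intended route.
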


We have an interpretation of arithmetic over the total partitions definable in the language of Young's lattice with a constant $[1]+[1]$, and another copy of arithmetic over $\mathcal P$ determined by the pull-back relations $+_{\#}$, $\times_{\#}$; therefore, in order to complete the argument we must show that the arithmetization $\#: \mathcal P \rightarrow \mathds{N}$ itself can be definably encoded in the total partitions.

\begin{proposition}\label{prop:tran}
There is a first-order formula $\Psi_{\mathrm{tran}}(x,y)$ in the language $(\leq, [1]+[1])$ of $\textbf{Y}^{\ast}$ such that for any $\sigma,\pi \in \mathcal P$,
\begin{align*}
\textbf{Y}^{\ast} \vDash \Psi_{\mathrm{tran}}(\sigma,\pi) \quad \Leftrightarrow \quad \pi = [\#\sigma].
\end{align*}
\end{proposition}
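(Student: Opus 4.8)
The plan is to build the formula $\Psi_{\mathrm{tran}}(x,y)$ by simulating, inside the total partitions, the Turing computation that converts a partition into the integer $\#\sigma$. The key observation is that the map $\#$ is itself recursive: given (an encoding of) the canonical representation $\sigma = \sum_{i=1}^k m_i[n_i]$, a machine reads off the pairs $(n_i,m_i)$, looks up the $n_i$-th prime, and forms the product $\prod p_{n_i}^{m_i}$ (with the special case for trivial partitions). So by the standard result on recursive relations cited above, there is an arithmetic formula $\phi_{\#}(u,v)$ with $\langle\mathds N,+,\times\rangle\vDash\phi_{\#}(u,v)$ iff $v=\#(\text{the partition coded by }u)$ — provided we fix a recursive coding $u\mapsto$ partition. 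The subtlety is that we want $\Psi_{\mathrm{tran}}$ to take an \emph{actual partition} $\sigma\in\mathcal P$ as its first argument, not an integer code of it, so we cannot simply invoke Lemma~\ref{lem:translate} and be done: the left-hand input lives in $\mathcal P$ while Lemma~\ref{lem:translate} only translates formulas whose variables range over the total partitions.

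Therefore the heart of the argument is to extract, in the language $(\leq,[1]+[1])$, enough combinatorial data about an arbitrary $\sigma$ to pin down $\#\sigma$, and then feed that data through the arithmetic formula $\phi_{\#}$ via Lemma~\ref{lem:translate}. Concretely, I would proceed as follows. First, using Proposition~\ref{prop:freq} (which picks out, for a part $\rho=[r]$ of $\sigma$, the total partition $[n]$ recording that $[r]$ occurs exactly $n$ times) together with Proposition~\ref{eblockus} (membership of a part) and Lemma~\ref{lem:unif}(2) and Proposition~\ref{lem:rect}, I can write a formula asserting ``$[r]$ is a part of $\sigma$ occurring with frequency $[m]$,'' all in terms of total partitions $[r],[m]$. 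Next, I enumerate the parts of $\sigma$: the number of distinct parts, their sizes $n_1>\dots>n_k$, and their multiplicities $m_1,\dots,m_k$ are all definable as finite data attached to total partitions. The decisive step is to assemble this finite list into a single integer. For this I use the arithmetic side: within $\langle\mathds N,+,\times\rangle$ there is a recursive relation saying ``$w$ codes the finite sequence of pairs $((n_1,m_1),\dots,(n_k,m_k))$'' (say via the usual $\beta$-function or prime-power coding), and a further recursive relation ``$v=\#$ of the partition with that canonical representation'' — i.e. $v=\prod p_{n_i}^{m_i}$, or $v=p_1^{m-1}$ in the trivial case. Combining, there is an arithmetic formula $\phi_{\#}^{\ast}$ with free variables for $n_1,m_1,\dots,n_k,m_k$ (of bounded but arbitrary arity — one handles this by the coded-sequence trick so the arity is fixed) and for $v$, true exactly when $v=\#\sigma$. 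By Lemma~\ref{lem:translate} this has a translate $\psi$ in $(\leq,[1]+[1])$ over total partitions. Finally, $\Psi_{\mathrm{tran}}(x,y)$ says: $y$ is total, and there exist total partitions encoding the part-data of $x$ (obtained via the frequency/part formulas above) such that $\psi$ holds of that data and of $y$. Then $\textbf{Y}^{\ast}\vDash\Psi_{\mathrm{tran}}(\sigma,\pi)$ iff $\pi=[v]$ with $v=\#\sigma$, as required.

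The main obstacle is the bookkeeping that lets the formula range over ``the list of all parts of $\sigma$'' despite $\sigma$ having arbitrarily many distinct parts: a first-order formula has fixed arity, so one cannot literally quantify $\exists [n_1]\cdots\exists[n_k]$ with $k$ depending on $\sigma$. The clean way around this is to do all the packaging \emph{on the arithmetic side} — quantify over a single total partition $[w]$ where $w$ is an integer code of the whole sequence of pairs — and only use the Young's-lattice language to assert the local correctness conditions: for every total $[r]\le$ (largest part of $\sigma$), $[r]$ is a part of $\sigma$ with some frequency $[m]$ iff the pair $(r,m)$ is a decoded entry of $w$ (the decoding being a recursive, hence arithmetic, hence — via Lemma~\ref{lem:translate} — translatable predicate). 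Verifying that this correspondence is exact, and checking the trivial-partition special case of $\#$ separately (using Lemma~\ref{ecomptriv} to recognize trivial partitions and Lemma~\ref{lem:unif}(1) to read off their length), is routine once the coding scheme is set up. I would also remark, as the paper does after Proposition~\ref{prop:compare}, that no attempt is made here to control the quantifier complexity of $\Psi_{\mathrm{tran}}$ — mere definability suffices for the maximal definability property.
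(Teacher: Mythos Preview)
Your proposal is correct, but it takes a longer route than the paper's proof. The main difference is your introduction of an intermediate sequence-code $w$: you package the part-frequency data of $\sigma$ into a single integer $w$ via a $\beta$-function or prime-power coding, and then invoke a further arithmetic computation $w\mapsto \#\sigma$. The paper skips this entirely by observing that the target integer $\#\sigma=\prod_i p_{n_i}^{m_i}$ is \emph{itself} a prime-power encoding of the canonical representation of $\sigma$. So instead of coding up the part data and then decoding it into $\#\sigma$, the paper simply asserts that the prime factorization of $|\pi|$ matches the part-frequency structure of $\sigma$: using the translated recursive relation $\psi_{\texttt{Primexp}}([i],[m],\pi)$ (``the $i$-th prime occurs with exponent $m$ in $|\pi|$'') on one side and Proposition~\ref{prop:freq} (``$[i]$ is a part of $\sigma$ with frequency $m$'') on the other, the defining clause becomes a single biconditional, with the trivial and empty cases handled separately. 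Your approach has the virtue of being a fully general template (it would work for any recursive $\#$, not just this prime-power one), but the paper's proof is materially shorter because it exploits the specific shape of $\#$.
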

\begin{proof}
Let $\phi_{\texttt{Primexp}}$ be the arithmetic formula which defines the ternary relation $\texttt{Primexp} \subseteq \mathds{N}^{3}$ and $\psi_{\texttt{Primexp}}$ be the translation given by Lemma~\ref{lem:translate}. Then we see that $\pi = [\#\sigma]$ if and only if $\pi$ is total, and
\begin{enumerate}

	\item if $\sigma=\emptyset$, then $\pi=\emptyset$, and

	\item if $\sigma=m[1]$ for some $m$, then for all total partitions $[i],[k]$, 
	\[
	\psi_{\texttt{Primexp}}([i],[k],\pi) \rightarrow [i]=[1] \mathrel{\wedge} [k]=[m-1],
	\] 
	and 
	
	\item if $\sigma \neq m[1]$ or $\sigma \neq \emptyset$, then for all total partitions $[i],[m]$, $\psi_{\texttt{Primexp}}([i],[m],\pi)$ if and only if $[i]$ is a block of $\sigma$ which appears with frequency $m$. 
	
\end{enumerate}
Correctness follows from Lemma~\ref{lem:translate} and the definition of $\#: \mathcal P \rightarrow \mathds{N}$.
\end{proof}

The ternary relations $+_{\#}$ and $\times_{\#}$ can then be recovered by the formulas
\begin{align}
\exists x^{\ast} \exists y^{\ast} \exists z^{\ast} & \Psi_{\mathrm{tran}}(x,x^{\ast}) \wedge \Psi_{\mathrm{tran}}(y,y^{\ast}) \wedge \Psi_{\mathrm{tran}}(z,z^{\ast}) \wedge \phi_{\texttt{Add}}(x^{\ast},y^{\ast},z^{\ast}) \\
\exists x^{\ast} \exists y^{\ast} \exists z^{\ast} & \Psi_{\mathrm{tran}}(x,x^{\ast}) \wedge \Psi_{\mathrm{tran}}(y,y^{\ast}) \wedge \Psi_{\mathrm{tran}}(z,z^{\ast}) \wedge \phi_{\texttt{Mult}}(x^{\ast},y^{\ast},z^{\ast})
\end{align}
which can be seen using Proposition~\ref{prop:tran} and the properties displayed in Eqn.(4.2) and Eqn.(4.3). This completes the demonstration of the following theorem:

\begin{theorem}\label{thm:mdp}
$\textbf{Y}^{\ast} = \left\langle \mathcal P, \leq, [1]+[1] \right\rangle$ has the maximal definability property.
\end{theorem}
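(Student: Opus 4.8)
The plan is to prove Theorem~\ref{thm:mdp} by establishing the two inclusions $\mathrm{Def}(\mathcal P,\leq,[1]+[1]) \subseteq \mathrm{Def}(\mathcal P,+_{\#},\times_{\#})$ and $\mathrm{Def}(\mathcal P,+_{\#},\times_{\#}) \subseteq \mathrm{Def}(\mathcal P,\leq,[1]+[1])$ separately. The first inclusion is the easier direction, and it is essentially already carried out in the discussion preceding Lemma~\ref{lem:translate}: since $\#$ is a recursively computable bijection and the order $\leq$ can be checked recursively on the prime factorizations (via the Turing machine $T_{\mathrm{ord}}$ and the resulting recursive relation $\texttt{ord}$), the relation $\leq$ pulls back to a recursive, hence arithmetically definable, relation on $\mathds{N}$; the same applies to the singleton $\{[1]+[1]\}$. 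Thus every atomic relation of $\textbf{Y}^{\ast}$, and therefore every first-order definable relation, lies in $\mathrm{Def}(\mathcal P,+_{\#},\times_{\#})$.

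For the reverse inclusion I would assemble the pieces developed in Section~\ref{sec:3} and Section~\ref{sec:4}. The key point is that $\textbf{Y}^{\ast}$ already interprets $\langle\mathds{N},+,\times\rangle$ on the total partitions (Propositions~\ref{ecomptriv}, \ref{eaddition}, \ref{emultipli}), so by the standard translation mechanism (Lemma~\ref{lem:translate}) every arithmetically definable relation on $\mathds{N}$ has a companion relation on tuples of total partitions that is definable in the language $(\leq,[1]+[1])$. What remains is to transfer this from the copy of $\mathds{N}$ sitting inside the total partitions to the copy of $\mathds{N}$ that the arithmetization $\#$ places on \emph{all} of $\mathcal P$. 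This is exactly the content of Proposition~\ref{prop:tran}: the graph of $\sigma \mapsto [\#\sigma]$ is definable in $\textbf{Y}^{\ast}$. Given that, for any $R \in \mathrm{Def}(\mathcal P,+_{\#},\times_{\#})$ with arithmetic-defining formula $\phi_R$, the relation $R$ is defined in $\textbf{Y}^{\ast}$ by
\[
\exists x_1^{\ast}\cdots\exists x_k^{\ast}\ \bigwedge_{j=1}^{k}\Psi_{\mathrm{tran}}(x_j,x_j^{\ast}) \ \wedge\ \psi_{\phi_R}(x_1^{\ast},\ldots,x_k^{\ast}),
\]
where $\psi_{\phi_R}$ is the translation of $\phi_R$ supplied by Lemma~\ref{lem:translate}; the displayed formulas for $+_{\#}$ and $\times_{\#}$ in Eqn.(4.4) and Eqn.(4.5) are the special cases, and one uses the compatibility noted in Eqn.(4.2)–(4.3) to check correctness.

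The main obstacle — and the step where the real work lies — is Proposition~\ref{prop:tran}, namely showing that the arithmetization $\#$ is itself definably encoded among the total partitions. Its proof in turn rests on two things: that the purely arithmetic relation $\texttt{Primexp}$ (the $i$-th prime occurs to exponent $m$ in $n$) is recursive and therefore arithmetically definable, and that the combinatorial relation ``$[i]$ is a block of $\sigma$ appearing with frequency $m$'' is definable in $\textbf{Y}^{\ast}$ — which is Proposition~\ref{prop:freq}, the most intricate of the Section~\ref{sec:3} results. Once Proposition~\ref{prop:tran} is in hand, everything else is bookkeeping with the translation lemma. I would therefore present the proof of Theorem~\ref{thm:mdp} as a short synthesis: cite the first inclusion from the preceding paragraph, then invoke Lemma~\ref{lem:translate}, Proposition~\ref{prop:tran}, and the two displayed formulas to obtain the second inclusion, concluding $\mathrm{Def}(\mathcal P,\leq,[1]+[1]) = \mathrm{Def}(\mathcal P,+_{\#},\times_{\#})$ and hence the maximal definability property.
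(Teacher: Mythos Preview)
Your proposal is correct and follows essentially the same route as the paper: the first inclusion via recursiveness of $\#$ and $\leq$, and the second via Lemma~\ref{lem:translate} together with Proposition~\ref{prop:tran} (resting on Proposition~\ref{prop:freq}) to pull the arithmetic on total partitions back across the arithmetization. The only cosmetic difference is that the paper states the displayed formulas just for $+_{\#}$ and $\times_{\#}$ (which already suffices, since these generate $\mathrm{Def}(\mathcal P,+_{\#},\times_{\#})$), whereas you write the general formula for an arbitrary $R$; both are fine.
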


%%%%%%%%%%%%%%%%%%%%%%%%%%%%%%%%%%%%%%%%%%%%%%%%%%%%%%%%%%%%%%%%%%%%%%%%%%%%%%%%%%%%%%%%%%%%%%%%%%%%

\section{Complexity of some first-order fragments?}\label{sec:5}

Theorem~\ref{thm:sigma1} establishes that the $\Sigma_4$-theory of Young's lattice with the single constant $[1]+[1]$ added to the language is undecidable, but does so by an interpretation of arithmetic which may be too expensive in the complexity of formulas involved. It may be that undecidability persists in less formally complex fragments of the ordering.

The satisfaction of a $\Sigma_1$-sentence in $\textbf{Y} = \left\langle \mathcal P, \leq \right\rangle$ asserts the existence of a certain poset embedded in $\textbf{Y}$. Since $\textbf{Y}$ contains a non-trivial cover, it follows that the $\Sigma_1$-theory is at least \textbf{NP}-hard if it is decidable.

\begin{conjecture}\label{conj:1}	
The $\Sigma_1$-theory of Young's lattice $\textbf{Y} = \left\langle P, \leq \right\rangle$ is \textbf{NP}-complete.
\end{conjecture}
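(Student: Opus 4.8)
\textbf{Proof proposal for Conjecture~\ref{conj:1}.}

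The plan is to show membership in \textbf{NP} (the \textbf{NP}-hardness already being observed in the text via the non-trivial cover, e.g. by a reduction encoding a \textsc{SAT}-type problem through the choice between $[2]$ and $[1]+[1]$ above $[1]$). A $\Sigma_1$-sentence $\exists x_1 \cdots \exists x_n\, \phi(x_1,\ldots,x_n)$ with $\phi$ open in the language $\{\leq\}$ is satisfiable in $\textbf{Y}$ if and only if there is an assignment $x_i \mapsto \pi_i \in \mathcal P$ making the quantifier-free matrix true; since the only atomic formulas are $x_i \leq x_j$ and $x_i = x_j$, what matters is only the isomorphism type of the induced finite sub-poset-with-repetitions on the $\pi_i$. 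So the first step is a \emph{small model argument}: show that if such an assignment exists, then one exists in which each $|\pi_i|$ is bounded by a polynomial in $n = |\phi|$, and moreover each $\pi_i$ has a description (e.g. its canonical representation, or its sequence of parts) of size polynomial in $n$. Given such a bound, the witness is a polynomial-size certificate and verifying that it satisfies $\phi$ is polynomial-time (each atom $\pi_i \leq \pi_j$ is checked by Eqn.~\eqref{eqn:1}), which places the problem in \textbf{NP}.

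For the small model argument itself I would proceed order-theoretically. Consider any satisfying assignment $\pi_1,\ldots,\pi_n$ and the finite poset $P$ they generate (as a sub-order of $\textbf{Y}$, with the pattern of equalities they realize). The matrix $\phi$ only constrains the relations $\pi_i \leq \pi_j$ and $\pi_i = \pi_j$, hence any other assignment $\pi'_1,\ldots,\pi'_n$ realizing the \emph{same} comparability-and-equality pattern also satisfies $\phi$. So it suffices to show: every finite poset on $\leq n$ points that embeds into $\textbf{Y}$ (via some order-embedding, allowing identifications) embeds into $\textbf{Y}$ using partitions of polynomially-bounded cardinality. One natural route is to use the grading of $\textbf{Y}$ by cardinality together with the fact that $\textbf{Y}$ is a distributive lattice containing, for every $h$, a rich antichain structure at height $h$: concretely, one can try to realize any $n$-point poset inside a ``rectangle'' of partitions contained in $k[k]$ for $k = O(n)$, since the interval $[\emptyset, k[k]]$ in $\textbf{Y}$ is itself a large distributive lattice and by Birkhoff duality every finite poset of order-dimension $\leq 2$ — and with a little care, every finite poset on $n$ points after padding — embeds as a suborder of a product of two chains of length $O(n)$, which in turn sits inside $\textbf{Y}$ as partitions with at most $O(n)$ parts each of size at most $O(n)$. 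An alternative, more hands-on route: directly assign to the $i$-th variable a partition built from a block of $O(\log n)$ or $O(n)$ distinct ``coordinate'' part-sizes so that the partial order \eqref{eqn:1} reproduces a prescribed containment pattern; this is essentially encoding the poset into a bounded-width Ferrers shape.

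The main obstacle I expect is precisely this embedding lemma: controlling the cardinality needed to realize an arbitrary $n$-point comparability pattern inside $\textbf{Y}$. The subtlety is that \eqref{eqn:1} is a rather rigid componentwise order on descending sequences, so not every finite poset embeds \emph{as an induced suborder} with small parameters in the obvious way; one must be careful about incomparabilities (two partitions are incomparable exactly when neither dominates the other componentwise), and about the equalities the matrix may force or forbid. I would handle this by first reducing to the case where $\phi$ is a conjunction of literals (a disjunctive normal form split, legitimate since we only need one satisfiable disjunct), then realizing the resulting ``$\leq$/$\not\leq$/$=$/$\neq$'' constraint graph: the $=$-classes collapse variables, the $\leq$-constraints define a preorder we must embed, and the $\not\leq$ and $\neq$ constraints must merely be \emph{not} accidentally satisfied, which is the delicate part. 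A clean way to guarantee the negative constraints is to work inside a product of two long chains $C_a \times C_b$ with $a,b = O(n)$: there every finite poset of dimension at most $2$ embeds as an induced suborder, and a generic $n$-point poset can be made dimension-$2$ after adding $O(n)$ auxiliary points; since $C_a \times C_b$ embeds into $\textbf{Y}$ via $(i,j) \mapsto$ the partition with $b-j$ parts all of size... — one fixes the exact encoding here — every atom of $\phi$ is then decided correctly and all partitions used have cardinality $O(n^2)$, giving the desired polynomial certificate and hence Conjecture~\ref{conj:1} would follow. (As stated in the paper this remains a conjecture; the above is the route by which I would attempt it, the embedding lemma being the crux.)
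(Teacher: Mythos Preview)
The statement in question is labelled a \emph{conjecture} in the paper and is left open; the paper supplies no proof, only the preceding remark that the existence of a non-trivial cover forces the $\Sigma_1$-theory to be at least \textbf{NP}-hard. There is therefore nothing in the paper to compare your argument against, and you are right to flag in your final sentence that this remains conjectural.

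On the merits of your sketch: the overall strategy---reduce to a small-model property by showing that any satisfiable quantifier-free type is already realized by partitions of polynomially bounded size, then verify in polynomial time via Eqn.~\eqref{eqn:1}---is the natural one. The genuine gap is the step where you pass through order dimension. The claim that ``a generic $n$-point poset can be made dimension-$2$ after adding $O(n)$ auxiliary points'' is false: order dimension is monotone under taking induced subposets, so enlarging a poset can never lower its dimension, and the standard examples $S_d$ on $2d$ vertices have dimension $d$. Consequently the route through an embedding into a product of two chains cannot handle an arbitrary comparability pattern that a $\Sigma_1$-sentence may demand. Any small-model argument will need a different embedding lemma---one that realizes arbitrary finite comparability/incomparability patterns directly inside $\textbf{Y}$ with controlled part sizes---and that is exactly the crux the paper leaves open. (Your aside about encoding \textsc{SAT} via the choice between $[2]$ and $[1]+[1]$ is also off in the constant-free language, since conjugation swaps them; the paper's \textbf{NP}-hardness remark rests on more generic grounds.)
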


Stated in an alternate manner, is there an \textbf{NP}-characterization of the posets which embed in Young's lattice? If we allow constants for all partitions, how much does the resulting $\Sigma_n$-theory differ from the theory without constants?  Let $\left\langle \mathcal P, \leq, \pi : \pi \in \mathcal P \right\rangle$ denote Young's lattice where we have added every partition has a constant to the language. 

Using the arithmetic interpretation in Section~\ref{sec:3} and the definability of recursive relations referenced in Section~\ref{sec:4}, it follows that for every partition $\pi \in \mathcal P$ there is a $\Sigma_4$-formula in $\textbf{Y}^\ast$ which is uniquely satisfied by $\pi$. Then every formula in $\left\langle \mathcal P, \leq, \pi : \pi \in \mathcal P \right\rangle$ utilizing constants $\pi_1,\ldots,\pi_n$ is logically equivalent to a formula in $\textbf{Y}^{\ast}$ at the complexity expense of a finite disjunction of $\Sigma_4$-formulas representing the constants $\pi_1,\ldots,\pi_n$.

\begin{corollary}
For $n \geq 4$, the $\Sigma_n$-definable relations of $\textbf{Y}^{\ast}$ and $\left\langle \mathcal P, \leq, \pi : \pi \in \mathcal P \right\rangle$ are the same.
\end{corollary}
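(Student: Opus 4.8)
The plan is to prove the two inclusions separately: that every $\Sigma_n$-definable relation of $\textbf{Y}^{\ast}$ is $\Sigma_n$-definable in $\langle \mathcal P, \leq, \pi : \pi \in \mathcal P \rangle$, and conversely. The first inclusion is immediate. Since the constant $[1]+[1]$ is among those adjoined to form $\langle \mathcal P, \leq, \pi : \pi \in \mathcal P \rangle$, the structure $\textbf{Y}^{\ast}$ is a reduct of it, so any $\Sigma_n$-formula in the language $(\leq,[1]+[1])$ is, without change, a $\Sigma_n$-formula in the larger language and defines the same relation over $\mathcal P$.

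For the reverse inclusion, I would begin with a relation $R\subseteq\mathcal P^m$ defined over $\langle \mathcal P, \leq, \pi : \pi \in \mathcal P \rangle$ by a single $\Sigma_n$-formula $\phi(x_1,\ldots,x_m)$; being finite, $\phi$ mentions only finitely many of the constants, say $\pi_1,\ldots,\pi_k$. By the discussion preceding the statement — which, through Theorem~\ref{thm:mdp}, Lemma~\ref{lem:translate}, Proposition~\ref{prop:tran}, and the arithmetic definability of the fixed (hence recursive) number $\#\pi_i$, produces for each $\pi_i$ a $\Sigma_4$-formula $\delta_i(y)$ in the language $(\leq,[1]+[1])$ that is uniquely satisfied by $\pi_i$ — I would let $\phi^{\circ}(x_1,\ldots,x_m,y_1,\ldots,y_k)$ be the formula obtained from $\phi$ by replacing each constant $\pi_i$ by a fresh variable $y_i$, so that $\phi^{\circ}$ is a $\Sigma_n$-formula of $\textbf{Y}^{\ast}$, and then set
\[
\psi(x_1,\ldots,x_m)\;:=\;\exists y_1\cdots\exists y_k\,\Bigl(\,\bigwedge_{i=1}^{k}\delta_i(y_i)\;\wedge\;\phi^{\circ}(x_1,\ldots,x_m,y_1,\ldots,y_k)\,\Bigr).
\]
Since each $\delta_i$ picks out exactly $\pi_i$, the formula $\psi$ defines $R$ over $\textbf{Y}^{\ast}$, so it only remains to see that $\psi$ is logically equivalent to a $\Sigma_n$-formula.

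That last point is the only step requiring care, and it is exactly where the hypothesis $n\ge 4$ enters. The conjunction $\bigwedge_{i=1}^{k}\delta_i(y_i)$ is $\Sigma_4$ (a conjunction of $\Sigma_4$-formulas), hence $\Sigma_n$ by the inclusions $\Sigma_4\subseteq\Sigma_n$ recorded in Section~\ref{sec:2}; conjoining this with the $\Sigma_n$-formula $\phi^{\circ}$ and merging matching quantifier blocks yields again a $\Sigma_n$-formula; and prepending the existential block $\exists y_1\cdots\exists y_k$ merges into the leading existential block, leaving a $\Sigma_n$-formula. The substantive input — the $\Sigma_4$-definability of singletons $\{\pi\}$ in $\textbf{Y}^{\ast}$ — has already been supplied by the paragraph before the corollary, so for the corollary itself the only real obstacle is this quantifier-prefix bookkeeping, which breaks for $n<4$ precisely because the $\delta_i$ cannot in general be brought below level $\Sigma_4$.
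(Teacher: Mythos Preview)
Your proposal is correct and follows essentially the same route as the paper: the paper's proof is just the paragraph preceding the corollary, which asserts the $\Sigma_4$-definability of each singleton $\{\pi\}$ in $\textbf{Y}^{\ast}$ and then observes that the constants in any formula can be eliminated at the cost of these $\Sigma_4$ formulas. You have simply spelled out the quantifier-prefix bookkeeping that the paper leaves implicit, and your use of a conjunction $\bigwedge_i \delta_i(y_i)$ is the right formalization (the paper's phrase ``finite disjunction'' appears to be a slip).
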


It may may be that with all constants, undecidability of the $\Sigma_n$-theory arises at the earliest possible instance. If $\exists \bar{x} \phi(\bar{x},\pi_1,\ldots,\pi_k)$ is a $\Sigma_1$-sentence utilizing the constants $\{\pi_1,\ldots,\pi_k\}$, then the restriction of $\textbf{Y}$ to the constants determines a poset $P(\pi_1,\ldots,\pi_k)$ and the satisfaction of $\exists \bar{x} \phi(\bar{x},\pi_1,\ldots,\pi_k)$ asserts the existence of a subposet in $\textbf{Y}$ which extends $P(\pi_1,\ldots,\pi_k)$.

\begin{conjecture}\label{conj:2}						
The $\Sigma_1$-theory of Young's lattice with constants $\left\langle \mathcal P, \leq, \pi : \pi \in \mathcal P \right\rangle$ is undecidable.			
\end{conjecture}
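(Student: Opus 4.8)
The plan is to prove Conjecture~\ref{conj:2} by reducing a known undecidable problem to the $\Sigma_1$-theory with constants, exploiting the fact that a $\Sigma_1$-sentence $\exists \bar{x}\,\phi(\bar{x},\pi_1,\ldots,\pi_k)$ over $\left\langle \mathcal P,\leq,\pi:\pi\in\mathcal P\right\rangle$ asks precisely whether the finite poset $P(\pi_1,\ldots,\pi_k)$ generated by the named constants can be extended, inside $\textbf{Y}$, to a finite poset realizing a prescribed finite diagram of order and non-order constraints among the witnesses $\bar{x}$ and the constants. So the combinatorial content of the $\Sigma_1$-theory with constants is: \emph{given a finite partial order $Q$ with a distinguished ``base'' subposet identified with a concrete tuple of partitions, does $Q$ embed in $\textbf{Y}$ extending that base?} The strategy is to show this embedding problem is powerful enough to encode an undecidable problem about arithmetic — most naturally, via the structure of intervals and covers in $\textbf{Y}$, to simulate a fragment of the machinery already built in Section~\ref{sec:3}, but now with the simplification that the arithmetic anchors ($[1]+[1]$, factorials, specific total partitions) can be supplied \emph{as constants} rather than defined.

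The key steps, in order, would be: (i) formalize the ``extension of a base poset'' semantics of $\Sigma_1$-sentences with constants, isolating exactly which relations among witnesses and constants a quantifier-free matrix can impose (conjunctions and disjunctions of atoms $x\leq y$, $x\leq \pi_i$, $\pi_i\leq x$, and their negations); (ii) observe that with constants available one can name, for instance, all total partitions $[n]$ and trivial partitions $n[1]$ up to any needed point, and more importantly can name the factorials $[n]!$ and the partitions $n[m]$ witnessing products, so that the $\Pi_3$-detours of Propositions~\ref{eaddition}, \ref{emultipli}, \ref{prop:compare} collapse: the truth of an arithmetic atom $a+b=c$ or $a\cdot b=c$ becomes expressible by the \emph{existence} of a short configuration of partitions sitting in prescribed order relations with named constants; (iii) package a Diophantine problem — by Matiyasevich \cite{matiy}, the solvability of a single polynomial equation over $\mathds{N}$ is undecidable — as a single $\Sigma_1$-sentence with constants, where the existential witnesses encode candidate values of the variables (as total partitions) together with the auxiliary partitions certifying each addition and multiplication step, and the quantifier-free matrix asserts that all these certifications hold and that the final equation balances; (iv) conclude that decidability of the $\Sigma_1$-theory with constants would decide Hilbert's Tenth Problem, a contradiction.

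The hard part will be step (ii)–(iii): making the \emph{certificates} for addition and multiplication purely $\Sigma_1$. The definitions in Section~\ref{sec:3} are genuinely $\Pi_3$ because they quantify universally over auxiliary partitions $\beta^{\ast}$ (``for all partitions with distinct parts whose parts are exactly the $\alpha$ in a given interval\ldots''); a naive translation keeps those universal quantifiers. To push into $\Sigma_1$ one must instead \emph{name the relevant witnesses as constants} and check, with only order atoms, that a candidate total partition $[r]$ equals $[m]+[n]$ or $[m]\cdot[n]$ — e.g.\ by comparing it against the constant $[m+n]$ or $[mn]$ directly, which defeats the purpose, \emph{or} by finding an order-theoretic gadget, built from finitely many constants plus a bounded number of existential witnesses related only by $\leq$ and $\neq$, that is realizable in $\textbf{Y}$ exactly when the arithmetic relation holds. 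Identifying such a gadget — plausibly using the grading of $\textbf{Y}$ by cardinality together with the behaviour of covers along a rectangular chain $[n],2[n],\ldots$, whose top and bottom one can name — is the crux. An alternative route, should a direct gadget prove elusive, is to reduce instead from the word problem or from a suitable tiling/Post-correspondence problem whose finitary combinatorial structure matches subposet-extension more transparently; but I expect the Diophantine route with named arithmetic anchors to be the cleanest once the right $\Sigma_1$ certificate for one multiplication is found, since products then chain and polynomials are finite sums of monomials. The remaining steps — assembling the finite disjunction/conjunction, bounding the constants used, and invoking \cite{matiy} — are routine.
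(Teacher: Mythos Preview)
The statement you are attempting to prove is Conjecture~\ref{conj:2}, which the paper leaves \emph{open}; there is no proof in the paper to compare against. The surrounding discussion explicitly frames it as a conjecture motivated by the analogous (proved) result for the subword order, and the author in fact writes that he \emph{suspects the direct $\Sigma_1$-interpretation of arithmetic ``can not succeed in Young's lattice''}, suggesting instead that Conjecture~\ref{conj:2} ``may still be established by an interpretation with a weaker theory.'' So what you have written is not a proof to be checked against the paper's; it is a proposed attack on an open problem.

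Evaluated on its own terms, your proposal is a research plan with the decisive step missing, and you say so yourself: you identify finding a purely $\Sigma_1$ certificate for a single multiplication (and addition) as ``the crux,'' and you do not supply one. Everything else---the semantics of $\Sigma_1$-sentences with constants as a base-poset extension problem, the packaging of a Diophantine instance once arithmetic atoms are $\Sigma_1$, the appeal to Matiyasevich---is routine and correctly described, but none of it constitutes progress until the gadget exists. The difficulty is real: the Section~\ref{sec:3} definitions of addition and multiplication rest on properties (rectangularity via unique lower cover, ``$[r]$ is a part of $\pi$'', ``same height'') that are genuinely $\Pi_1$ or $\Pi_2$, and naming finitely many fixed constants does not obviously eliminate those universal quantifiers, because the partitions whose rectangularity or height you need to certify are the \emph{existentially quantified} witnesses encoding unknown solution values, not the constants. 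Your own observation that naming $[m+n]$ or $[mn]$ ``defeats the purpose'' is exactly the obstruction: the constants available to you depend only on the polynomial $P$, not on a putative solution, so any $\Sigma_1$ certificate must verify an arithmetic relation among three \emph{unknown} total partitions using only order atoms against a fixed finite set of anchors. Nothing in the proposal indicates how to do this, and the paper's author explicitly doubts it can be done this way. Until you either produce the gadget or pivot to a concrete reduction from a different undecidable problem (your PCP/tiling suggestion is just a gesture at this point), there is no proof here.
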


The conjectures are motivated by the fact that they have affirmative answers for the subword order on finite alphabets (\cite[Thm 3.3]{halfon} and \cite[Prop 2.2]{kuske}). The ability to use constants and the subword order in building first-order formulas allows for a $\Sigma_1$-interpretation of natural arithmetic which establishes the analogue of Conjecture~\ref{conj:2} in \cite[Thm 3.3]{halfon}. We suspect this approach can not succeed in Young's lattice, but Conjecture~\ref{conj:2} may still be established by an interpretation with a weaker theory. 

It is unclear the expressive power one gains after adjoining the constant [1]+[1] to the language, and in light of Theorem~\ref{thm:sigma1} we leave the following questions:

\begin{question}
What is the complexity of the $\Sigma_n$-theory of $\textbf{Y}^{\ast}$, for $n=1,2,3$?
\end{question}

\begin{question}
What is the complexity of the $\Sigma_n$-theories of $\textbf{Y}$, for $n=2,3$?
\end{question}

In \cite{halfon}, it shown that in the pure subword order over a two-letter alphabet the $\Sigma_2$-theory is undecidable, but we hesitate to conjecture that the same remains true in Young's lattice.

Finally, we end with the question of how much of the previous development can be carried out for general differential posets. R.P. Stanley ends his paper \cite[Problem 1]{stanley} intuiting doubt that a ``reasonable'' characterization of differential posets is possible, and perhaps so, but all methods known to this author for generating differential posets suggests the following may still be possible to establish:

\begin{conjecture}
If $\textbf{P}$ is a nontrivial differential poset, then the elementary theory is undecidable and non-finitely axiomatizable.
\end{conjecture}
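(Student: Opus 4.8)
The plan is to transport the strategy used for Young's lattice to the general setting, isolating exactly which structural facts about differential posets make the interpretation of arithmetic go through. Throughout let $\textbf{P}$ be an $r$-differential poset; here ``nontrivial'' should be read as $r \geq 1$ (equivalently, $\textbf{P}$ is not a single point), since for $r = 0$ the only example is degenerate. As in Section~\ref{sec:2}, the minimum $\hat 0$ is $\Pi_1$-definable, the set of atoms is $\Pi_1$-definable and has size $r$, the cover relation is $\Pi_1$-definable, and for each fixed $n$ the set of elements of rank $n$ is definable from the atoms by a formula of bounded quantifier complexity. Mimicking the passage from $\textbf{Y}$ to $\textbf{Y}^\ast$, one first adjoins a finite set of constants naming all elements of rank at most some $N$, chosen large enough that the pointwise stabiliser of these constants in $\mathrm{Aut}(\textbf{P})$ acts trivially on the whole poset. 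Granting a reconstruction principle (discussed below), every element of $\textbf{P}$ then becomes first-order definable by the inductive argument of Proposition~\ref{indivdefine}.

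The heart of the matter is to locate, inside an abstract differential poset, a definable \emph{arithmetic spine} carrying definable copies of addition and multiplication, replacing the total, trivial and rectangular partitions and the factorials $[n]!$ of Section~\ref{sec:3}. Two mechanisms suggest themselves. The first is to count saturated chains: for $x$ of rank $n$ let $e(x)$ denote the number of saturated chains from $\hat 0$ to $x$. Stanley's identity $\sum_{\rho(x)=n} e(x)^2 = r^n\, n!$ produces factorials intrinsically, and the recursion $e(x) = \sum_{y \prec x} e(y)$ is expressible relative to the cover relation; thus, once a definable reference chain through $\textbf{P}$ is available, the pair $(x, e(x))$ — and hence the quantity $n!$ attached to rank $n$ — can plausibly be pinned down definably. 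The second is to construct, rank by rank, a definable maximal chain $c_0 \prec c_1 \prec c_2 \prec \cdots$ playing the role of the total partitions, using extremal features of the cover structure (elements with fewest, or most, lower covers at their rank), and then to read off a notion of rank, of length, and a multiplicative operation from the interaction of this chain with the rest of $\textbf{P}$. In either case, once ternary relations $\texttt{Add}$ and $\texttt{Mult}$ coding $+$ and $\times$ on an initial segment of the spine are shown definable, undecidability of the $\Sigma_1$-theory of $\langle \mathds{N}, +, \times\rangle$ (Matiyasevich) forces undecidability of the elementary theory of $\textbf{P}$, and completeness of that theory together with the interpretation yields inherent non-finite-axiomatizability, exactly as in Theorems~\ref{thm:sigma1} and~\ref{thm:elementary}.

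A separate prerequisite is a reconstruction principle for differential posets: every element of sufficiently large rank should be determined, among elements of its rank, by its set of lower covers. For Young's lattice this is the Pretzel--Siemons theorem invoked before Proposition~\ref{indivdefine}; for the Young--Fibonacci lattice and the other known families it can presumably be verified directly, but a statement valid for \emph{all} differential posets is what the full conjecture requires, and I do not know whether it holds — this is the first candidate for the main obstacle. The second, and I think the deeper one, is the absence of a uniform combinatorial vocabulary (``total'', ``trivial'', ``rectangular'', ``factorial'') in an abstract differential poset; Stanley's own doubt \cite{stanley} about classifying such posets is precisely doubt about the existence of such a vocabulary. The most promising route is therefore to extract from the defining relation $DU - UD = rI$ only the quantitative consequences one actually needs — rank growth, the chain-counting identities, local finiteness — and to encode the arithmetic through a single definable parameter, the analogue of the total partition $[n]$, that simultaneously controls rank and the relevant chain counts, since a first-order formula can refer only to a fixed finite portion of these data at a time. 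Making that single parameter definable without a classification is the delicate step.
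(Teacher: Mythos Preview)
This statement is a \emph{conjecture} in the paper, not a theorem: the paper offers no proof and explicitly leaves it open, remarking only that the methods ``can be pursued for the wider class of differential posets where it may be possible to prove similar results.'' So there is no paper proof to compare against, and what you have written is a research outline rather than a proof --- as you yourself acknowledge when you flag the two main obstacles.

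A few comments on the outline itself. First, the reconstruction step and the rigidification by constants are not actually needed for the conjecture as stated: in the paper, undecidability and non-finite-axiomatizability (Theorems~\ref{thm:sigma1} and~\ref{thm:elementary}) follow purely from the interpretation of arithmetic in Section~\ref{sec:3}, while Proposition~\ref{indivdefine} and reconstruction are used only later for the maximal definability property. So your first obstacle (a Pretzel--Siemons analogue for all differential posets) is a red herring for this particular conjecture. Second, even the rigidification step is more delicate than you suggest: Stanley's inductive constructions of differential posets allow arbitrary choices at every rank, so there can be nontrivial automorphisms fixing every element of rank $\leq N$ for any given $N$, and finitely many constants need not suffice. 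Third --- and this is where you are exactly right --- the genuine obstruction is the absence of anything playing the role of total, trivial, rectangular, and factorial partitions in an abstract differential poset. Your two candidate mechanisms (chain-counting via $e(x)$ and an extremal maximal chain) are natural first guesses, but neither is known to yield a \emph{definable} spine in general: the identity $\sum e(x)^2 = r^n n!$ is a global numerical fact, not a first-order property of a single element, and ``element of rank $n$ with fewest lower covers'' need not be unique or even definably distinguishable. The honest summary is that your proposal correctly identifies the shape of an argument and its principal gap, but does not close that gap --- which is precisely why the paper states this as a conjecture.
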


%%%%%%%%%%%%%%%%%%%%%%%%%%%%%%%%%%%%%%%%%%%%%%%%%%%%%%%%%%%%%%%%%%%%%%%%%%%%%%%%%%%%%%%%%%%%%%%%%%%%

\begin{acknowledgements}							
I would like to thank R.S. Thinniyam for introducing me to the papers \cite{halfon,kuske,kudinov} and work on the complexity of the various word orders.				
\end{acknowledgements}

%%%%%%%%%%%%%%%%%%%%%%%%%%%%%%%%%%%%%%%%%%%%%%%%%%%%%%%%%%%%%%%%%%%%%%%%%%%%%%%%%%%%%%%%%%%%%%%%%%%%%%%%%%%%%							

\end{document}